\newtheorem{theo}{Theorem}[section]
\newtheorem{prop}[theo]{Proposition}
\newtheorem{lem}[theo]{Lemma}
\newtheorem{cor}[theo]{Corollary}
\newtheorem{conj}[theo]{Conjecture}
\newtheorem{rema}[theo]{Remark}
\def \kbar {{\overline k}}
\def \Romannumeral #1 {\expandafter\uppercase\expandafter {\romannumeral #1} }
\def \pic {{\rm {Pic}}}
\def \calf {{\mathcal F}}
\def \calg {{\mathcal G}}
\def \spec {{\rm{Spec\,}}}
\def \dim {{\rm{dim\,}}}
\def\ra{\rightarrow}
\def \Z {{\bf Z}}
\def \Q {{\bf Q}}
\def \F {{\bf F}}
\def \RR {{\bf R}}
\def \C {{\bf C}}
\def \X {{\mathcal X}}
\def \Y {{\mathcal Y}}
\def \V {{\mathcal V}}
\def \W {{\mathcal W}}
\def \T {{\mathcal T}}
\def \L {{\mathcal L}}
\def \Het {H_{\mbox{\rm\scriptsize\'et}}}
\def\smallsquare{\vbox{\hrule\hbox{\vrule height 1 ex\kern 1 ex\vrule}\hrule}}
\def\ICY{{\rm IC}_{\mathcal Y}}
\def\ICXB{{\rm IC}_{{\mathcal X}^B}}
\DeclareFontFamily{U}{wncy}{}
\DeclareFontShape{U}{wncy}{m}{n}{%
   <5>wncyr5%
   <6>wncyr6%
   <7>wncyr7%
   <8>wncyr8%
   <9>wncyr9%
   <10>wncyr10%
   <11>wncyr10%
   <12>wncyr6%
   <14>wncyr7%
   <17>wncyr8%
   <20>wncyr10%
   <25>wncyr10}{}
\DeclareMathAlphabet{\cyrille}{U}{wncy}{m}{n}
\def \R{{\bf R}}
\def \calf{{\mathcal F}}
\def \calg{{\mathcal G}}
\title{A generalization of Beilinson's geometric height pairing}
\author{Damian R\"ossler and Tam\'as Szamuely}
\address{Mathematical Institute, University of Oxford, Andrew Wiles Building, Radcliffe Observatory Quarter, Woodstock Road, Oxford OX2 6GG, United Kingdom}
\email{damian.rossler@maths.ox.ac.uk}
\address{Dipartimento di Matematica, Universit\`a di Pisa, Largo Bruno Pontecorvo 5, 56127 Pisa, Italy and Alfr\'ed R\'enyi Institute of Mathematics, Hungarian Academy of Sciences, Re\'altanoda utca 13--15, H-1053 Budapest, Hungary}
\email{tamas.szamuely@unipi.it}
\date{\today}
\begin{document}
\maketitle

\begin{abstract} In the first section of his seminal paper on height pairings, Beilinson constructed an $\ell$-adic height
pairing for rational Chow groups of homologically trivial cycles of complementary codimension on
smooth projective varieties over the function field of a curve over an algebraically closed field, and asked about an generalization to higher dimensional bases. In this paper we answer Beilinson's question by constructing a pairing for varieties defined over the function field of a smooth variety $B$ over an algebraically closed field, with values in the second $\ell$-adic cohomology group of $B$. Over $\C$ our pairing is in fact $\Q$-valued, and in general we speculate about its geometric origin.\end{abstract}

\section{Introduction}

Let $k$ be an algebraically closed field, $B$ a smooth integral $k$-scheme of finite type with function field $K=k(B)$, and $X$ a smooth projective integral $K$-variety of dimension $d$. Fix a prime $\ell$ invertible in $k$. For an integer $i\geq 0$ we denote by $CH^i(X)_\Q$ the Chow group of codimension $i$ cycles on $X$ tensored by $\Q$ and by $CH^i_{\rm hom}(X)_\Q$ the subspace of homologically trivial cycles, i.e. the kernel of the $\ell$-adic \'etale cycle map
$$
CH^i(X)_\Q\to \Het^{2i}(X_{\overline K}, \Q_\ell(i))
$$
where $\overline K$ stands for an algebraic closure of $K$.

In the first section of his seminal paper \cite{beilinsonhp}, Beilinson worked in the case where $B$ is a smooth proper curve and constructed (unconditionally) an $\ell$-adic height pairing
\begin{equation}\label{dim1pairing}
CH^p_{\rm hom}(X)_\Q\otimes CH^q_{\rm hom}(X)_\Q\to \Q_\ell
\end{equation}
for  $p+q=d+1$. This served as a motivation for his (conditional) construction of the height pairing in the number field case.

In this article we extend Beilinson's work to a not necessarily proper base $B$ of arbitrary dimension, solving the problem stated at the end of subsection (1.1) of \cite{beilinsonhp}.

\begin{theo}\label{mainthm} In the situation above and for $p+q=d+1$ there exists a pairing
\begin{equation}\label{pairing}
CH^p_{\rm hom}(X)_\Q\otimes CH^q_{\rm hom}(X)_\Q\to \Het^{2}(B,\Q_\ell(1))
\end{equation}
which for a smooth proper $B$ of dimension 1 coincides with Beilinson's pairing (\ref{dim1pairing}) after composing with the trace isomorphism $ \Het^{2}(B,\Q_\ell(1))\stackrel\sim\to\Q_\ell$ of Poincar\'e duality.

There is also a similar pairing in the case where $k$ is algebraic over a finite field instead of being algebrically closed.
\end{theo}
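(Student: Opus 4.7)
The plan is to spread out $X$ geometrically over $B$ and exploit relative Poincar\'e duality via the Leray spectral sequence. First I would choose a dense open $U\subseteq B$ and a smooth projective morphism $f\colon\mathcal X\to U$ whose generic fibre is $X/K$. Given $Z\in CH^p_{\rm hom}(X)_\Q$ and $W\in CH^q_{\rm hom}(X)_\Q$ with $p+q=d+1$, after possibly shrinking $U$ their flat Zariski closures $\mathcal Z,\mathcal W\subset\mathcal X$ are defined, and the cycle classes $[\mathcal Z]\in\Het^{2p}(\mathcal X,\Q_\ell(p))$, $[\mathcal W]\in\Het^{2q}(\mathcal X,\Q_\ell(q))$ restrict on the geometric generic fibre to $[Z_{\overline K}]$ and $[W_{\overline K}]$, which vanish by hypothesis. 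They therefore lie in the first step $F^1$ of the Leray filtration on $\Het^*(\mathcal X,\Q_\ell(*))$.

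By degeneration at $E_2$ of the $\ell$-adic Leray spectral sequence of the smooth projective morphism $f$, together with multiplicativity of the Leray filtration, the cup product $[\mathcal Z]\cup[\mathcal W]$ lies in $F^2\Het^{2d+2}(\mathcal X,\Q_\ell(d+1))$, whose graded quotient is
$$F^2/F^3\;\simeq\;\Het^2(U,R^{2d}f_*\Q_\ell(d+1))\;=\;\Het^2(U,\Q_\ell(1))$$
via the trace isomorphism $R^{2d}f_*\Q_\ell\simeq\Q_\ell(-d)$. The resulting class coincides with the proper pushforward $f_*([\mathcal Z]\cup[\mathcal W])$, is visibly bilinear, and factors through rational equivalence because rationally equivalent cycles on $\mathcal X$ have identical $\ell$-adic cohomology classes. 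This defines a tentative pairing valued in $\Het^2(U,\Q_\ell(1))$.

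The principal difficulty is to lift this class to $\Het^2(B,\Q_\ell(1))$. By the \'etale localization exact sequence
$$\Het^2(B,\Q_\ell(1))\to \Het^2(U,\Q_\ell(1))\to \Het^3_{B\setminus U}(B,\Q_\ell(1)),$$
it suffices to kill the residue of the class along the boundary. Reducing via blow-ups and absolute purity to the case of a smooth boundary divisor $D$, the residue lies in $\Het^1(D,\Q_\ell)$. I would control it by passing to a proper extension $\overline f\colon\overline{\mathcal X}\to B$ of $f$ with $\overline{\mathcal X}$ regular and $\overline{\mathcal X}\setminus\mathcal X$ a strict normal crossings divisor (invoking de Jong alterations if necessary), extending the cycles to $\overline{\mathcal Z},\overline{\mathcal W}\subset\overline{\mathcal X}$, and comparing the a priori class $\overline f_*([\overline{\mathcal Z}]\cup[\overline{\mathcal W}])\in \Het^2(B,\Q_\ell(1))$ with the tentative one. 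Different extensions of $\mathcal Z$ and $\mathcal W$ to $\overline{\mathcal X}$ differ by vertical cycles supported over the boundary, and the corresponding corrections to $\overline f_*([\overline{\mathcal Z}]\cup[\overline{\mathcal W}])$ must be analysed to single out a canonical lift; this step, which crucially uses the homological triviality of the cycles on the generic fibre to annihilate the boundary contributions, is where the bulk of the technical work lies.

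Finally, I would verify that the lift is independent of the choice of spreading out, of cycle representatives in their rational equivalence classes, and of the extensions to $\overline{\mathcal X}$; check compatibility with Beilinson's pairing in the curve case by composing with the Poincar\'e trace $\Het^2(B,\Q_\ell(1))\xrightarrow\sim\Q_\ell$ and unwinding the construction on the level of Galois cohomology of $K$; and handle the case where $k$ is algebraic over a finite field by carrying out the construction Galois-equivariantly over $\overline k$ and taking continuous $\gal(\overline k/k)$-invariants at the end.
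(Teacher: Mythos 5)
Your first two paragraphs (spreading out, observing that $[\mathcal Z],[\mathcal W]$ lie in $F^1$ of the Leray filtration, cupping to get a class in $\Het^2(U,\Q_\ell(1))$) are sound and essentially parallel the paper's preliminaries. The genuine gap is the third paragraph. Lifting from $\Het^2(U,\Q_\ell(1))$ to $\Het^2(B,\Q_\ell(1))$ is not technical bookkeeping: it \emph{is} the theorem. Your proposed mechanism — extend $\mathcal Z,\mathcal W$ to a proper regular (or altered) $\overline{\mathcal X}\to B$ and compare $\overline f_*([\overline{\mathcal Z}]\cup[\overline{\mathcal W}])$ with the tentative class — runs into the very ambiguity it is supposed to resolve: two closures of $\mathcal Z$ differ by a cycle supported over $B\setminus U$, and there is no reason (in the bad-reduction case) that the resulting discrepancy in $\Het^2(B,\Q_\ell(1))$ vanishes. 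The paper proves such an independence statement only under the assumption that $X$ extends to a smooth proper $B$-scheme (Proposition \ref{homeq}); in general this geometric construction is precisely the open Conjecture \ref{mainconj}. Also, in characteristic $p$ a de Jong alteration $\overline{\mathcal X}\to B$ does not restrict to $f$ over $U$, it is generically finite over $\mathcal X$, so even matching the tentative pairing requires a separate argument. You flag that "this is where the bulk of the technical work lies," but no mechanism is offered; the proposal stops short of a proof.

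The paper avoids the lifting problem altogether by reorganising the construction. Instead of lifting the cup-product class (the \emph{output}) across $j\colon U\hookrightarrow B$, it lifts each \emph{input} $\alpha^p_U\in\Het^1(U,\R^{2p-1}\pi_*\Q_\ell(p))$ to $\Het^{1-b}(B,j_{!*}\R^{2p-1}\pi_*\Q_\ell(p)[b])$, which injects into $\Het^1(U,\cdot)$ by an elementary vanishing lemma (Proposition \ref{keylem}). The duality pairing of perverse sheaves
$j_{!*}\calf[b]\otimes^{\bf L}j_{!*}\R Hom(\calf[b],\Q_\ell(b)[2b])\to\Q_\ell(b)[2b]$
then lands canonically in $\Het^2(B,\Q_\ell(1))$ with no residue to kill and no choice of closure to compare. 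The real content is Proposition \ref{mainprop}: homologically trivial cycle classes \emph{do} lie in this perverse subspace. The paper proves this either via weight arguments over finite fields followed by a specialization argument, or via the decomposition theorem applied to (an alteration of) a relative compactification $\X^B\to B$ together with Proposition \ref{ICclass} about intersection complexes. Your sketch does not engage with any of this; in particular, the finite-field case in the paper is not obtained by taking $\gal(\overline k/k)$-invariants of the geometric construction, but is the case where the perversity statement is \emph{easiest} (the maps (\ref{pervpart1})--(\ref{pervpart2}) become isomorphisms by weights, Proposition \ref{keyprop}), and it then feeds into the general case by specialization.
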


The proof of the theorem is based on Beilinson's original philosophy. In Section 2 we construct a cohomological pairing on the `perverse parts' of those cohomology groups that contain classes of the relevant homologically trivial cycles. Then we prove that these cycle classes actually lie in the perverse part. We offer two methods for this, in Sections \ref{secfinite} to \ref{secdec}. The first one starts with the finite field case where weight arguments can be invoked, and then applies a specialization argument. The second one relies on the decomposition theorem for perverse sheaves of geometric origin but not on weight arguments. \smallskip

Our proof based on the decomposition theorem works equally well in the topological setting when $k=\C$. In this case we obtain a result with $\Q$-coefficients whose proof will be sketched at the end of Section \ref{secdec}:

\begin{theo}\label{complextheo} Assume $k=\C$. For $p+q=d+1$ there exists a pairing with values in Betti cohomology
\begin{equation}\label{pairing}
CH^p_{\rm hom}(X)_\Q\otimes CH^q_{\rm hom}(X)_\Q\to H^{2}(B(\C),\Q(1))
\end{equation}
which composed with the natural map $H^{2}(B(\C),\Q(1))\to \Het^{2}(B,\Q_\ell(1))$ coincides with the pairing of Theorem \ref{mainthm}.
\end{theo}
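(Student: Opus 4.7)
The plan is to repeat the construction of Theorem~\ref{mainthm} in the complex-analytic setting, working with the category of perverse sheaves of $\Q$-vector spaces on $B(\C)$ as developed by Beilinson--Bernstein--Deligne. Over $\C$ one has at one's disposal: (a) the perverse $t$-structure on the bounded derived category $D^b_c(B(\C),\Q)$ of constructible complexes; (b) the decomposition theorem for direct images under proper morphisms of complexes of geometric origin; (c) Betti cycle class maps $CH^i(X)_\Q\to H^{2i}(X(\C),\Q(i))$ and Poincar\'e duality on $X$; (d) cup products on $H^*(B(\C),\Q)$. All four are available with $\Q$-coefficients, so the construction of Section~2 combined with the argument of Section~\ref{secdec} can be transcribed to yield a pairing with values in $H^2(B(\C),\Q(1))$.

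More precisely, I would first define a cohomological pairing on the Betti perverse parts of the relevant cohomology groups of a smooth projective model $f:\X\to U\hookrightarrow B$, using cup products, the Leray spectral sequence and Poincar\'e duality on the fibres, in complete analogy with Section~2. The second step is to show that Betti cycle classes of homologically trivial cycles lie in these perverse parts. Here the argument of Section~\ref{secdec} applies with only notational modifications, since it uses only the decomposition theorem for direct images of pure complexes of geometric origin together with elementary sheaf-theoretic manipulations, all of which are at our disposal in the analytic setting (indeed BBD were originally proved in both categories, and M. Saito's theory of Hodge modules provides a lift).

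To match the resulting pairing with the pairing of Theorem~\ref{mainthm}, I would invoke the Artin comparison isomorphism $H^*(B(\C),\Q)\otimes\Q_\ell\simeq\Het^*(B,\Q_\ell)$ (and its analogue on $X$) together with its compatibility with the six operations, the perverse $t$-structures and the cycle class maps. Since each ingredient entering the construction (cup products, Leray filtrations, Poincar\'e duality, perverse truncations, Betti and $\ell$-adic cycle classes) transports compatibly under the comparison, the Betti pairing tensored with $\Q_\ell$ agrees with the $\ell$-adic pairing of Theorem~\ref{mainthm}, which is the compatibility asserted in the statement.

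The main obstacle is not conceptual but organizational: one must verify systematically that Artin's comparison intertwines the Betti and $\ell$-adic versions of \emph{every} step of the construction, and in particular that both the perverse truncations used to extract the `perverse parts' and the cycle class maps into them match up. Both of these compatibilities are classical, so once the Betti construction is written down in parallel to the $\ell$-adic one, the identification with Theorem~\ref{mainthm} should be essentially formal.
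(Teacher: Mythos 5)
Your proposal follows essentially the same route as the paper's own sketch: transcribe the cohomological pairing of Section 2 and the decomposition-theorem argument of Section \ref{secdec} to the Betti setting with $\Q$-coefficients, then use the comparison isomorphism to identify the result with the $\ell$-adic pairing. The only additional point the paper makes is that over $\C$ resolution of singularities lets one take a regular compactification $\X^B$ directly, so the alteration $\W$ of Lemma \ref{lemic} is unnecessary and the Betti analogue of Proposition \ref{ICclass} becomes immediate.
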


We conjecture that our pairing is of motivic origin:

\begin{conj}\label{mainconj} The pairing of Theorem \ref{mainthm} comes from a pairing
$$
CH^p_{\rm hom}(X)_\Q\otimes CH^q_{\rm hom}(X)_\Q\to {\rm Pic}(B)_\Q
$$
followed by the cycle map.
\end{conj}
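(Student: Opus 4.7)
The strategy I would pursue mirrors Beilinson's construction in the curve case but exploits the fact that on a spread $f\colon\mathcal{X}\to B$ of $X$ the pushforward of a codimension-$(d+1)$ cycle class naturally lives in $CH^1(B)_\Q=\pic(B)_\Q$ for arbitrary $\dim B$. Concretely, after shrinking $B$ if necessary, I would choose a smooth proper morphism $f\colon\mathcal{X}\to B$ with generic fibre $X$ (invoking de Jong's alterations to handle any desingularisation issues), and for cycles $Z\in CH^p_{\rm hom}(X)_\Q$, $W\in CH^q_{\rm hom}(X)_\Q$ pick extensions $\widetilde Z,\widetilde W$ to $\mathcal{X}$. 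I would then try to define
$$
\langle Z,W\rangle_{\pic}:=f_*(\widetilde Z\cdot\widetilde W)\in \pic(B)_\Q,
$$
the product being the refined intersection of cycles of codimension $p$ and $q$ on the smooth $\mathcal{X}$; by dimension count its pushforward is a codimension-one class on $B$.

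The crux is showing this is independent of the extensions. Two lifts of $Z$ differ by a cycle supported on $\mathcal{X}\setminus X_K$, hence (after further shrinking and blowing up) of the form $f^*D\cdot V$ for some Cartier divisor $D$ on $B$ and cycle $V$ on $\mathcal{X}$. The projection formula rewrites the ambiguity as $D\cdot f_*(V\cdot\widetilde W)$, and one must show this class is rationally trivial modulo the residual freedom to modify $\widetilde Z,\widetilde W$ along fibres of $f$ using rational equivalences on $X$. The homological triviality of $Z$ and $W$ should guarantee that the perverse-degree-zero part of the extension, which is what actually contributes, is canonical enough for the indeterminacy to collapse; this would be the Chow-theoretic shadow of the perverse argument of Section 2.

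Granting this, compatibility with the pairing of Theorem \ref{mainthm} becomes formal, since the cycle class map $\pic(B)_\Q\to \Het^{2}(B,\Q_\ell(1))$ commutes with proper pushforward and cup products:
$$
\mathrm{cl}\bigl(f_*(\widetilde Z\cdot\widetilde W)\bigr)=f_*\bigl(\mathrm{cl}(\widetilde Z)\cup\mathrm{cl}(\widetilde W)\bigr),
$$
and the right-hand side is exactly what the construction of Section 2 produces once one traces through the definitions on the perverse summands accommodating $\mathrm{cl}(Z)$ and $\mathrm{cl}(W)$.

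The main obstacle is the missing motivic analogue of the decomposition theorem. In the $\ell$-adic proof of Theorem \ref{mainthm} one may project a cycle class onto the perverse-degree-zero summand of $Rf_*\Q_\ell$, but no analogous Chow--K\"unneth decomposition of the relative motive is available in general, so one cannot canonically extract a "primitive" lift of a homologically trivial cycle. I therefore expect unconditional progress to be confined to situations where such a motivic decomposition is accessible (abelian schemes over $B$, certain products of curves, or over $\C$ via Leray-type arguments \`a la Voisin); in full generality the conjecture looks unapproachable without a substantial input from the Beilinson--Bloch--Murre package of standard motivic expectations.
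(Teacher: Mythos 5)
This statement is a conjecture, not a result the paper proves; the paper establishes it only under the additional hypothesis that $X$ extends to a smooth projective morphism $\X\to B$ (Section \ref{secgoodred}), and otherwise offers evidence from Moret-Bailly and from Kahn's refined height pairing. Your basic construction --- extend the cycles to a model $\X$, intersect, push forward to $\pic(B)$, and check compatibility with the $\ell$-adic pairing by commuting the cycle class with pushforward and cup products --- is precisely the paper's Corollary \ref{goodredproduct} and the final proposition of Section \ref{secgoodred}. So far so good.

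The gap is in your treatment of independence of the extension. First, your claim that two lifts of $Z$ differ by a cycle "of the form $f^*D\cdot V$" after blowing up is not justified; a cycle supported on $\X\setminus\X_U$ need not be an intersection with a pulled-back divisor class. The paper's Proposition \ref{homeq} argues differently and more elementarily: by localization the difference comes from a cycle on $\X_Z$ with $Z=B\setminus U$, one may remove $Z_{\rm sing}$ (codimension $\geq 2$ in $B$, hence invisible to $\pic(B)$) to make $Z$ smooth, and then a fibrewise cycle-class computation using smooth proper base change and the homological triviality of $g^*z$ gives $\langle z,z'\rangle=0$ directly --- no decomposition-theoretic input at all. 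Second, your proposed replacement, that "the perverse-degree-zero part of the extension... is canonical enough for the indeterminacy to collapse," has no Chow-theoretic content; this is exactly where the difficulty resides, and the paper explicitly leaves it open. Third, there is a scope problem: shrinking $B$ changes $\pic(B)$, and passing to a de Jong alteration changes the generic fibre to a covering of $X$, so one would still need to descend cycles --- a step not addressed. The paper's good-reduction hypothesis is precisely what lets it avoid both of these issues. Your concluding assessment that the general case requires substantial motivic input is consistent with the paper's own stance, but the partial argument you sketch is weaker and less precise than the paper's Section \ref{secgoodred}.
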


In Section \ref{secgoodred} we prove the conjecture under the strong assumption that $X$ extends to a smooth proper $B$-scheme. This is done using a simple construction involving the intersection product.
In the bad reduction case there is some evidence for the conjecture when $X$ is an abelian variety: the construction of Moret-Bailly (\cite{moret}, \S III.3) gives the required pairing in the case $p=1$, in fact over an arbitrary field $k$ and only assuming $B$ to be normal.

In the case of a proper $B$ of dimension $b$ we expect that the pairings of Conjecture \ref{mainconj} yield non-degenerate pairings after composing with the degree map ${\rm Pic}(B)_\Q\to\Q$ associated with a fixed ample line bundle $\mathcal L$ on $B$ (i.e. the map obtained by intersecting a class in ${\rm Pic}(B)_\Q\cong CH_{b-1}(B)_\Q$ with the $(b-1)$-st power of the class of $\mathcal L$ in $CH_{b-1}(B)_\Q$ and then taking the degree of the resulting zero-cycle class).

Furthermore, we expect pairings as in Conjecture \ref{mainconj} to exist over an arbitrary perfect field but at the moment we are not able to construct their cohomological realization in this more general setting. However, we offer an Arakelovian counterpart of the conjecture in Section \ref{secarak} below.

Inspired by a lecture on a preliminary version of this note, Bruno Kahn \cite{kahn} has defined a certain subgroup $CH^p(X)^{(0)}\subset CH^p(X)$ and constructed a pairing
$$
CH^p(X)^{(0)}_\Q\otimes CH^q_{\rm hom}(X)^{(0)}_\Q\to {\rm Pic}(B)_\Q
$$
purely by cycle-theoretic manipulations, over an arbitrary perfect field. He conjectures that his subgroup $CH^p(X)^{(0)}$ is in fact the subgroup of numerically trivial cycles and therefore contains $CH^p_{\rm hom}(X)$. This is strong evidence in support of our conjecture.

We thank Bruno Kahn and Klaus K\"unnemann for very helpful discussions. A preliminary version of Kahn's paper \cite{kahn} was also useful when writing up the details of the proof of Proposition \ref{homeq} below.\smallskip

\noindent{\em A word on conventions:} Except for the last section, by `variety' we shall always mean an integral separated scheme of finite type over a field. When working with perverse sheaves in the $\ell$-adic setting we shall use $\Q_\ell$-coefficients instead of ${\overline \Q}_\ell$-coefficients in order to remain in line with Beilinson's original setup. This is justified by (\cite{bbd}, Remark 5.3.10 and the remarks in 4.0).

\section{The cohomological pairing}\label{seccons}

We keep the notation from the introduction, except that the base field $k$ is allowed to be an arbitrary perfect field. We spread out the morphism $X\to\spec K$ to a \hbox{smooth} projective morphism $\pi:{\mathcal X}\to U$ for a suitable affine open subscheme $U\subset B$, and denote by $j$ the inclusion map $U\to B$. This section is devoted to the construction of a pairing
\begin{equation}\label{cohpairing}
\Het^{1-b}(B,j_{!*}\R^{2p-1}\pi_*\Q_\ell(p) [b])\times \Het^{1-b}(B, j_{!*}\R^{2q-1}\pi_*\Q_\ell(q)[b])\to \Het^{2}(B, \Q_\ell(1))
\end{equation}
where $b:=\dim B=\dim U$ and $j_{!*}$ is, as usual, the intermediate extension functor familiar from the theory of perverse sheaves (notice that since $\R^{2p-1}\pi_*\Q_\ell(p)$ is  locally free on $U$, its shift by $b$ is a perverse sheaf on $U$).

We proceed in two steps.\medskip

\noindent {\em Step 1.} Assume given a lisse sheaf $\mathcal F$ on an open subscheme $j:\, U\hookrightarrow B$. Consider the duality pairing
$$
\calf [b]\otimes^{\bf L} \R Hom(\calf [b], \Q_\ell(b)[2b])\to \Q_\ell(b)[2b].
$$
Such a pairing corresponds to a morphism $$\calf[b]\to D(\R Hom(\calf [b], \Q_\ell(b)[2b]))$$ in $D^b(U,\Q_\ell)$, where $D$ is the dualizing functor corresponding to the dualizing complex $\Q_\ell(b)[2b]$ on $U$. Since $j_{!*}\Q_\ell(b)[2b]=\Q_\ell(b)[2b]$ by smoothness of $B$ and the intermediate extension functor $j_{!*}$ is interchangeable with $D$ (see \cite{kiwe}, Corollary III.5.3), applying $j_{!*}$ gives a map
$$j_{!*}\calf[b]\to D(j_{!*}\R Hom(\calf [b], \Q_\ell(b)[2b])),$$
whence finally a duality pairing of perverse sheaves
$$
j_{!*}\calf [b]\otimes^{\bf L} j_{!*}\R Hom(\calf [b], \Q_\ell(b)[2b])\to \Q_\ell(b)[2b].
$$
\medskip

\noindent {\em Step 2.} We apply the above to $\calf:=\R^{2p-1}\pi_*\Q_\ell$. In this case fibrewise Poincar\'e duality gives $\R Hom(\R^{2p-1}\pi_*\Q_\ell , \Q_\ell)\cong \R^{2q-1}\pi_*\Q_\ell(d)$ with our convention $p+q=d+1$, so that
$$
\R Hom(\R^{2p-1}\pi_*\Q_\ell [b], \Q_\ell(b)[2b])\cong \R^{2q-1}\pi_*\Q_\ell(b+d)[b].
$$
Plugging this into the pairing of Step 1 we obtain a pairing
$$
j_{!*}\R^{2p-1}\pi_*\Q_\ell [b]\otimes^{\bf L} j_{!*}\R^{2q-1}\pi_*\Q_\ell(b+d)[b]\to \Q_\ell(b)[2b]
$$
and, after twisting by $\Q_\ell(p+q-d-b)=\Q_\ell(1-b)$, a pairing
$$
j_{!*}\R^{2p-1}\pi_*\Q_\ell(p) [b]\otimes^{\bf L} j_{!*}\R^{2q-1}\pi_*\Q_\ell(q)[b]\to \Q_\ell(1)[2b]
$$
in the bounded derived category of $\Q_\ell$-sheaves on $B$. Passing to cohomology, we finally obtain the announced pairing (\ref{cohpairing}).
\medskip

In dimension 1, the groups in the arguments of the pairing (\ref{cohpairing}) are those considered in Lemma 1.1.1 of \cite{beilinsonhp}. Note that there are
morphisms
\begin{equation}\label{pervpart1} \Het^{1-b}(B,j_{!*}\R^{2p-1}\pi_*\Q_\ell(p) [b])\to \Het^{1}(U,\R^{2p-1}\pi_*\Q_\ell(p))\end{equation}
and
\begin{equation}\label{pervpart2}\Het^{1-b}(B, j_{!*}\R^{2q-1}\pi_*\Q_\ell(q)[b])\to \Het^{1}(U,\R^{2q-1}\pi_*\Q_\ell(q))\end{equation}
induced by the natural map from $j_{!*}$ to $\R j_*$. We now prove that, again in accordance with the dimension 1 case, these maps are in fact injective. This will be a special case of the following more general statement.

\begin{prop}\label{keylem}
For a perverse sheaf $\calf$ on $U$ the natural map
\begin{equation}\label{morkeylem}
\Het^{1-b}({ B}, { j}_{!*}\calf)\to \Het^{1-b}({ B}, \RR{ j}_{*}\calf)
\end{equation}
is injective.
\end{prop}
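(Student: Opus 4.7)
The plan is to reduce the injectivity claim to an Artin-style vanishing for the cone of the natural map $j_{!*}\calf\to \R j_*\calf$. Denote this cone by $C$, and write $i\colon Z\hookrightarrow B$ for the closed complement of $j$. Since both $j_{!*}\calf$ and $\R j_*\calf$ restrict to $\calf$ on $U$, the complex $C$ is supported on $Z$; the long exact hypercohomology sequence attached to the triangle $j_{!*}\calf\to \R j_*\calf\to C$ then reduces the injectivity of (\ref{morkeylem}) to the single vanishing $\Het^{-b}(B,C)=0$.

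The first step is to check that $C\in {}^p D^{\geq 0}(B)$. The complex $\R j_*\calf$ belongs to ${}^p D^{\geq 0}(B)$ because $j$ is an open immersion and $\calf$ is perverse; moreover, by the very definition of the intermediate extension, the natural map $j_{!*}\calf\to {}^p\mathcal{H}^0(\R j_*\calf)$ is a monomorphism of perverse sheaves. The long exact sequence of perverse cohomology attached to the triangle then forces ${}^p\mathcal{H}^i(C)=0$ for $i<0$.

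Since $C$ is supported on $Z$, one can write $C=i_*K$ with $K:=i^*C$. Because $i_*$ is perverse $t$-exact for a closed immersion, $K$ lies in ${}^p D^{\geq 0}(Z)$, and $\Het^{-b}(B,C)=\Het^{-b}(Z,K)$. Via the perverse hypercohomology spectral sequence
$$
E_2^{p,q}=\Het^p(Z,{}^p\mathcal{H}^q(K))\Longrightarrow \Het^{p+q}(Z,K),
$$
the remaining vanishing reduces to showing $\Het^n(Z,\mathcal{P})=0$ for every perverse sheaf $\mathcal{P}$ on $Z$ and every $n<-\dim Z$. But a perverse sheaf on a variety of dimension $z$ has constructible cohomology sheaves concentrated in degrees $[-z,0]$, and \'etale cohomology of any sheaf vanishes in negative degrees, so the ordinary hypercohomology spectral sequence yields this at once. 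Since $\dim Z\leq b-1$, we have $-b<-\dim Z$, and therefore $\Het^{-b}(Z,K)=0$, which concludes the argument.

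The main technical point is this last step: one needs both the amplitude bound $[-z,0]$ for perverse sheaves on a $z$-dimensional variety and the clean inequality $\dim Z\leq b-1$ which places $-b$ strictly below the Artin vanishing threshold $-\dim Z$. The remaining ingredients---the existence of the exact triangle, the perverse $t$-exactness of $i_*$, and the monomorphism $j_{!*}\calf\hookrightarrow {}^p\mathcal{H}^0(\R j_*\calf)$---are formal properties of perverse sheaves.
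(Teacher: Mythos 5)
Your proof is correct and follows essentially the same route as the paper: both arguments reduce injectivity to the vanishing of $\Het^{-b}$ of the cone of $j_{!*}\calf\to \R j_*\calf$, observe that this cone lies in ${}^pD^{\geq 0}$ and is supported on the boundary $Z$ of dimension $\leq b-1$, and conclude via the bound on its ordinary cohomology sheaves together with the hypercohomology spectral sequence. The only (harmless) difference is that the paper quotes Kiehl--Weissauer for the explicit description $i_*{}^p\tau_{\geq 0}i^*\R j_*\calf$ of the cone and for the amplitude bound, whereas you rederive the ${}^pD^{\geq 0}$ property directly from the definition of $j_{!*}$.
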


The proof of the proposition is based on a vanishing lemma.  If $i$ is the inclusion map of the closed complement $Z$ of $U$ in $X$, set
\begin{equation}\label{defcalg}
\calg:={ i}_*{}^p\tau_{\geq 0}{ i}^*\RR{ j}_*\calf
\end{equation}
where ${}^p\tau_{\geq 0}$ denotes, as usual, the perverse truncation functor. Recall (e.g. from \cite{kiwe}, Lemma III.5.1) that $\calg$ sits in a distinguished triangle
\begin{equation}\label{calgg}
{ j}_{!*}\calf\to  \RR{ j}_{*}\calf\to \calg\to { j}_{!*}\calf[1].
\end{equation}
Since ${ j}_{!*}\calf$ and   $\RR{ j}_{*}\calf$ are perverse sheaves (the first by definition, the second by \cite{kiwe}, Corollary III.6.2), so is $\calg$.

\begin{lem}\label{lemkeylem}
With notation as above, we have
$$
\Het^{c}({ B}, \calg)=0
$$
for all $c<-b+1$, where $b=\dim B$ as before.
\end{lem}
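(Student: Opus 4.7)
The plan is to exploit the fact that $\calg$ is a perverse sheaf supported on the closed complement $Z = B\setminus U$, which has dimension at most $b-1$, and then combine the support axiom for perverse sheaves with the vanishing of ordinary sheaf cohomology in negative degrees.

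First I would verify that $\calg$ has the form $i_*\calg'$ for a perverse sheaf $\calg'$ on $Z$. Indeed, $\RR j_*\calf$ is perverse on $B$ by \cite{kiwe}, Corollary III.6.2, and hence lies in perverse degrees $\leq 0$; since $i^*$ is right t-exact for the perverse t-structure, $i^*\RR j_*\calf$ also lies in perverse degrees $\leq 0$. Therefore ${}^p\tau_{\geq 0}i^*\RR j_*\calf$ is nothing but the zeroth perverse cohomology of $i^*\RR j_*\calf$, which is a perverse sheaf $\calg'$ on $Z$. Because $i$ is a closed immersion, $i_*$ is exact for the ordinary t-structure, and consequently $\Het^c(B,\calg)=\Het^c(Z,\calg')$ for every $c$.

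Next I would apply the support condition defining the perverse t-structure: any perverse sheaf $\calg'$ on $Z$ satisfies $\dim\,{\rm supp}\,{\mathcal H}^q(\calg')\leq -q$, so ${\mathcal H}^q(\calg')=0$ for $q<-\dim Z$. Since $U$ is a nonempty open subscheme of the integral $b$-dimensional scheme $B$, one has $\dim Z\leq b-1$, so ${\mathcal H}^q(\calg')=0$ for $q<-b+1$. Plugging this into the hypercohomology spectral sequence $\Het^p(Z,{\mathcal H}^q(\calg'))\Rightarrow \Het^{p+q}(Z,\calg')$ and using that ordinary \'etale cohomology vanishes in negative degrees yields $\Het^c(Z,\calg')=0$ for $c<-b+1$, which is the desired statement. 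No step looks like a serious obstacle: the only real care is in identifying $\calg$ as pushed forward from $Z$ via the t-exactness of $i_*$ and the right t-exactness of $i^*$, and in tracking the inequality $\dim Z\leq b-1$; everything else is bookkeeping.
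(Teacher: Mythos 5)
Your overall strategy matches the paper's: view $\calg$ as pushed forward from $Z$, bound the degrees in which the cohomology sheaves can be nonzero, and then feed this into the hypercohomology spectral sequence. The reduction to $\calg'$ on $Z$, the observation that $i_*$ is $t$-exact, and the spectral-sequence bookkeeping at the end are all sound (the paper in fact does not bother to show $\calg'$ is perverse, since only $\calg'\in {}^pD^{\geq 0}(Z)$ is needed, and that is automatic from the definition of ${}^p\tau_{\geq 0}$).

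However, there is a genuine gap in the justification of the crucial vanishing $\mathcal{H}^q(\calg')=0$ for $q<-\dim Z$. You derive it from the support condition $\dim\,\mathrm{supp}\,\mathcal{H}^q(\calg')\leq -q$. But for $q<-\dim Z$ this inequality is \emph{automatically} satisfied: every cohomology sheaf is supported on $Z$, so $\dim\,\mathrm{supp}\,\mathcal{H}^q(\calg')\leq \dim Z<-q$ with no constraint at all on $\mathcal{H}^q(\calg')$. The support condition (the $^pD^{\leq 0}$ half of perversity) controls vanishing in \emph{high} degrees, not low ones. The vanishing in low degrees that you actually need comes from the $^pD^{\geq 0}$ half of perversity (the cosupport condition), which is phrased in terms of $i_x^!\calg'$ rather than $\mathcal{H}^q(\calg')$, and extracting the desired bound on cohomology sheaves from it is a small but real argument. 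This is precisely what the paper appeals to via Lemma III.5.13 of Kiehl--Weissauer, which states that a complex in $^pD^{\geq 0}$ on a scheme of dimension $\leq d$ has vanishing cohomology sheaves in degrees $<-d$. Once you replace your incorrect inference from the support condition by this lemma (or by a direct argument via the cosupport condition), the rest of your proof goes through and agrees with the paper's.
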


\begin{proof} The complex ${}^p\tau_{\geq 0}{ i}^*\RR{ j}_*\calf$ lives  on the closed subscheme $Z$ which is of dimension at most $b-1$. Thus by (\cite{kiwe}, Lemma III.5.13) its cohomology sheaves are zero in degrees $<-b+1$. Since $i_*$ is an exact functor, the cohomology sheaves ${\mathcal H}^t(\calg)$  also vanish for $t<-b+1$. Thus in the hypercohomology spectral sequence
$$
E_2^{st}=\Het^s({ B}, {\mathcal H}^t(\calg))\Rightarrow \Het^{s+t}({ B}, \calg)
$$
the terms $E_2^{st}$ are 0 for $s<0$ or $t<-b+1$, hence for $s+t<-b+1$.
\end{proof}

\begin{proof}[Proof of Proposition \ref{keylem}] The distinguished triangle (\ref{calgg}) induces an exact sequence of cohomology groups
$$
\Het^{-b}({ B},\calg)\to\Het^{1-b}({ B}, { j}_{!*}\calf)\to \Het^{1-b}({ B}, \RR{ j}_{*}\calf)
$$
where the first term vanishes  by the lemma.
\end{proof}

The strategy of the proof of Theorem \ref{mainthm} is now as follows. Consider a homologically trivial cycle class $\alpha^p\in CH^p_{\rm hom}(X)_\Q$. Extending a representative $z$ of $\alpha^p$ to a cycle $z_U$ on a model $\pi:\,\X\to U$  of $X$ over $U$ sufficiently small, we obtain a cohomology class ${\rm cl}(z_U)\in \Het^{2p}(\X, \Q_\ell(p))$. Since $z$ is homologically trivial, the proper smooth base change theorem implies that the image of ${\rm cl}(z_U)$ in $\Het^{2p}(\X_{\overline x}, \Q_\ell(p))$ is trivial for every geometric point $\overline x$ of $U$. Thus it is in the kernel of the natural map
\begin{equation}\label{leraymap}
\Het^{2p}(\X, \Q_\ell(p))\to \Het^{0}(U, \R^{2p}\pi_*\Q_\ell(p)),
\end{equation}
whence a class $\alpha^p_U\in\Het^{1}(U,\R^{2p-1}\pi_*\Q_\ell(p))$.
Similarly, a class $\alpha^q$ in $CH^q_{\rm hom}(X)_\Q$ has a representative
$\alpha^q_U\in\Het^{1}(U,\R^{2q-1}\pi_*\Q_\ell(q))$ for $U\subset B$ sufficiently small.

For later use we spell this out in more detail. Noting that
$$
\Het^{2p}(\X, \Q_\ell(p))\cong \Het^{2p}(U,\R\pi_*\Q_\ell(p))\cong \Het^{2p}(U, \tau_{\leq 2p}\R\pi_*\Q_\ell(p))
$$
where $\tau_\leq$ denotes the sophisticated truncation functor on $D^b(U, \Q_\ell)$, the distinguished triangle
$$
\tau_{\leq 2p-1}\R\pi_*\Q_\ell(p)\to \tau_{\leq 2p}\R\pi_*\Q_\ell(p)\to \R^{2p}\pi_*\Q_\ell(p)[-2p]\to \tau_{\leq 2p-1}\R\pi_*\Q_\ell(p)[1]
$$
identifies the kernel of (\ref{leraymap}) with  $\Het^{2p}(U, \tau_{\leq 2p-1}\R\pi_*\Q_\ell(p))$,
which gives rise to the class $\alpha^p_U$ by applying the natural map
$
\tau_{\leq 2p-1}\R\pi_*\Q_\ell(p)\to \R^{2p-1}\pi_*\Q_\ell(p)[-2p+1].
$

In view of the injectivity of the maps (\ref{pervpart1}) and (\ref{pervpart2}) proven in Proposition \ref{keylem} the following statement makes sense.

\begin{prop}\label{mainprop} Assume $k$ is algebraically closed or algebraic over a finite field.

The classes $\alpha^p_U$ and $\alpha^q_U$ lie in the subgroups $$\Het^{1-b}(B,j_{!*}\R^{2p-1}\pi_*\Q_\ell(p) [b])\subset \Het^{1}(U,\R^{2p-1}\pi_*\Q_\ell(p))$$ and $$\Het^{1-b}(B,j_{!*}\R^{2q-1}\pi_*\Q_\ell(q) [b])\subset \Het^{1}(U,\R^{2q-1}\pi_*\Q_\ell(q)),$$ respectively.
\end{prop}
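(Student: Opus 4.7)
The plan is to reduce Proposition \ref{mainprop} to the vanishing of a connecting homomorphism in a long exact sequence and then kill the obstruction class by weight considerations, first over a finite field, and then over a general algebraically closed base field by a specialisation argument.

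Set $\calf := \R^{2p-1}\pi_*\Q_\ell(p)[b]$. The triangle (\ref{calgg}) applied to $\calf$ and passage to cohomology yield the exact sequence
$$
\Het^{1-b}(B, j_{!*}\calf) \to \Het^{1-b}(B, \R j_*\calf) \stackrel{\partial}{\to} \Het^{2-b}(B, \calg),
$$
whose middle term is $\Het^{1}(U, \R^{2p-1}\pi_*\Q_\ell(p))$ and whose first arrow is injective by Proposition \ref{keylem}. It therefore suffices to show that $\partial(\alpha^p_U) = 0$; the argument for $\alpha^q_U$ is identical.

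Suppose first that $k = \F_q$ is finite. Deligne's purity theorem applied to the smooth projective morphism $\pi$ shows that $\R^{2p-1}\pi_*\Q_\ell$ is lisse, pure of weight $2p-1$, so $\calf$ is a perverse sheaf pure of weight $b-1$, and by \cite{bbd} so is $j_{!*}\calf$. The functor $\R j_*$ preserves the lower weight bound, and $j_{!*}\calf$ realises the weight $(b-1)$ step of the weight filtration on $\R j_*\calf$; hence the perverse quotient $\calg$ is mixed of integer weights $\geq b$. Since \'etale cohomology preserves lower weight bounds (another standard consequence of Deligne's weight theorem applied to the structure morphism $B \to \spec k$), $\Het^{2-b}(B, \calg)$ has weights $\geq b$. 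On the other hand, the cycle class ${\rm cl}(z_U) \in \Het^{2p}(\X, \Q_\ell(p))$ is pure of weight $0$, and the recipe recalled before the proposition expresses $\alpha^p_U$ as the image of ${\rm cl}(z_U)$ under a chain of Galois-equivariant natural maps (the identification of the kernel of (\ref{leraymap}) with $\Het^{2p}(U, \tau_{\leq 2p-1}\R\pi_*\Q_\ell(p))$ followed by the truncation map), so $\alpha^p_U$ is of weight $0$. Assuming $b\geq 1$ (the case $b=0$ is trivial, as $B$ is then a point), $\partial(\alpha^p_U)$ would be a weight-$0$ element of a group of weights $\geq b \geq 1$, and must vanish.

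In general, when $k$ is algebraically closed (or algebraic over a finite field), one reduces to the previous case by a standard spreading-out and specialisation argument: spread out $(B, \pi:\X\to U, z)$ to a family over an open subset of $\spec R$ for a finitely generated subring $R\subset k$, shrinking the base so that $\R^{2p-1}\pi_*\Q_\ell$ remains lisse, all formations commute with base change, and the perverse sheaves $j_{!*}\calf$ and $\calg$ extend over $\spec R$. Applying the finite-field argument at a closed point with finite residue field gives the vanishing of the specialised $\partial(\alpha^p_U)$, and proper and smooth base change identify this with the original class over $k$. The main technical obstacle lies precisely here---checking that the intermediate extension functor and the triangle (\ref{calgg}) behave well under specialisation. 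This essentially occupies Section \ref{secfinite}. An alternative route, developed in Section \ref{secdec} via the BBD decomposition theorem for perverse sheaves of geometric origin, avoids weights entirely by directly splitting $\R\pi_*\Q_\ell(p)$ on $B$ into shifts of intermediate extensions of simple perverse sheaves and locating $\alpha^p_U$ in the summand of the form $j_{!*}\calf$; this route applies equally over $\C$ and yields Theorem \ref{complextheo}.
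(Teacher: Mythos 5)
Your overall strategy --- a weight argument over a finite field followed by a specialisation argument, with a second route via the decomposition theorem --- is exactly the paper's. But the weight argument over $\F_q$ has a genuine gap, and there is also an indexing slip.

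The slip first: the long exact sequence of the triangle (\ref{calgg}) reads
$$\Het^{1-b}(B, j_{!*}\calf) \to \Het^{1-b}(B, \R j_*\calf) \to \Het^{1-b}(B, \calg) \to \Het^{2-b}(B, j_{!*}\calf),$$
so the map you call $\partial$ is the degree-preserving map induced by $\R j_*\calf \to \calg$ and lands in $\Het^{1-b}(B,\calg)$, not $\Het^{2-b}(B,\calg)$.

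The gap is more substantial. Over $k=\F_q$ the group $\Het^{1-b}(B,\calg)$ is Galois cohomology over $k$ and sits in the Hochschild--Serre sequence
$$H^1(k, \Het^{-b}(\overline B, \overline\calg)) \to \Het^{1-b}(B, \calg) \to H^0(k, \Het^{1-b}(\overline B, \overline\calg)).$$
Your weight bound (in effect the paper's Lemma \ref{keylem2}) shows that $\Het^{1-b}(\overline B, \overline\calg)$ has all weights $>0$, so it has no Frobenius invariants and the $H^0$ term vanishes. But $\Het^{-b}(\overline B, \overline\calg)$ is only bounded below by weight $0$, so its Frobenius \emph{coinvariants} $H^1(k, \Het^{-b}(\overline B, \overline\calg))$ are not killed by weights. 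These would produce precisely ``weight-$0$'' classes in $\Het^{1-b}(B,\calg)$ that your argument cannot rule out; the assertion that $\Het^{1-b}(B,\calg)$ has weights $\geq b$ is false once one keeps track of the $B$-versus-$\overline B$ distinction. The paper closes this hole by a separate, non-weight input, namely Lemma \ref{lemkeylem}: since $\calg$ is supported on the closed complement $Z$ of $U$, of dimension $\leq b-1$, the cohomology $\Het^{c}(\overline B,\overline\calg)$ vanishes for $c<-b+1$ by the hypercohomology spectral sequence --- in particular $\Het^{-b}(\overline B,\overline\calg)=0$. Combined with the weight argument this gives $\Het^{1-b}(B,\calg)=0$ (the paper's Proposition \ref{keyprop}), which is stronger than the elementwise vanishing you aim for and is what you actually need. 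You must add this dimension-theoretic vanishing; the weight bound alone is not enough. (Relatedly, ``$\alpha^p_U$ is of weight $0$'' is not quite meaningful as stated, since over $\F_q$ only the image in geometric cohomology carries Frobenius eigenvalues.) Finally, a reference correction: the specialisation machinery is developed in Section \ref{secspec}, not Section \ref{secfinite}.
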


Theorem \ref{mainthm} immediately follows from this proposition by applying the pairing (\ref{cohpairing}) to the classes $\alpha^p_U$ and $\alpha^q_U$; by a standard argument the resulting class in $\Het^{2}(B, \Q_\ell(1))$ only depends on the classes $\alpha^p$  and $\alpha^q$.\medskip

In the case of a finite base field we'll see in the next section using a weight argument that the maps (\ref{pervpart1}) and (\ref{pervpart2}) are in fact isomorphisms, so Proposition \ref{mainprop} will be obvious for $k$ algebraic over a finite field.
We shall offer two proofs of Proposition \ref{mainprop} over algebraically closed fields. The first one, given in the next two sections, will proceed by reduction to the finite field case using a specialization argument. The second one will use the decomposition theorem for perverse sheaves. Note that the proof of this theorem in \cite{bbd} also proceeds by a specialization argument and weight arguments over a finite base field, so one may argue that the two arguments are not very different. However, over $\C$ there are other proofs of the decomposition theorem that avoid weight arguments (see \cite{decmi}, \cite{saito}) and therefore, combined with a base change argument as in Lemma \ref{bcdiag} below, we do obtain a different proof in characteristic 0.

\section{Proof of \ref{mainprop} over finite fields}\label{secfinite}

In this section we first consider the case of a finite base field $k$. We begin with
a statement on weights. Needless to say, its proof is based on Deligne's fundamental results \cite{weil2} on the Weil conjecture which we cite through (\cite{bbd}, (5.1.14)).

\begin{lem}\label{keylem2}
Assume that $k$ is a finite field, and  $\calf$ is a pure perverse sheaf on $U$ of weight $w$. Consider the base change $\overline B$ of $B$ to the algebraic closure $\overline k$, and the pullback $\overline\calg$ of the sheaf $\calg$ defined in (\ref{defcalg}) to $\overline B$.

The group $\Het^{1-b}({\overline B}, \overline\calg)$ has weights $>w+1-b$.
\end{lem}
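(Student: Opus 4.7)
My plan is to deduce the bound from two standard weight-preservation properties in \cite{bbd}. First, since $\calf$ is pure of weight $w$ and $j:U\hookrightarrow B$ is an affine open immersion, Gabber's purity theorem (\cite{bbd}, Section 5.3) gives that $j_{!*}\calf$ is pure of weight $w$ on $B$, while the stability result (\cite{bbd}, (5.1.14)) implies that $\RR j_*\calf$ has weights $\geq w$. Interpreting the distinguished triangle~(\ref{calgg}) as a short exact sequence of perverse sheaves
$$ 0 \to j_{!*}\calf \to \RR j_*\calf \to \calg \to 0 $$
already forces $\calg$ to have weights $\geq w$.

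The main obstacle is to upgrade this to the strict bound, namely that $\calg$ has weights $\geq w+1$. For this I would invoke the companion statement to Gabber's purity in \cite{bbd}, Section 5.3, which asserts precisely that the cokernel of $j_{!*}\calf\hookrightarrow \RR j_*\calf$ is of weights $\geq w+1$. The underlying reason is that $j_{!*}\calf$ is the image of $j_!\calf$ in $\RR j_*\calf$ within the perverse category; since $j_!\calf$ has weights $\leq w$ by the Verdier-dual of (\cite{bbd}, (5.1.14)), the subobject $j_{!*}\calf$ absorbs every weight-$w$ contribution of $\RR j_*\calf$, so that the quotient $\calg$ can only carry strictly larger weights.

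With $\calg$ of weights $\geq w+1$ in hand, the conclusion follows from one more application of (\cite{bbd}, (5.1.14)) to the structural morphism $\overline B\to \spec\overline k$: the complex $\RR\Gamma(\overline B,\overline\calg)$ has weights $\geq w+1$ as a complex over a point, which under the BBD convention means that $\Het^i(\overline B,\overline\calg)$ has weights $\geq w+1+i$ for every $i$. Setting $i=1-b$ yields weights $\geq w+2-b$, strictly greater than $w+1-b$, as required. This dovetails with the preceding result: Lemma~\ref{lemkeylem} shows $\Het^c(\overline B,\overline\calg)=0$ for $c<1-b$, while the present lemma controls the weight content of the first potentially nonzero cohomological degree.
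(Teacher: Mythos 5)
Your proof is correct and follows the same route as the paper: you show $\calg$ has weights $>w$ (i.e.\ $\geq w+1$) from the purity of $j_{!*}\calf$ together with the weight bound on $\RR j_*\calf$, then apply (\cite{bbd}, (5.1.14)) to the structure morphism to obtain weights $\geq w+2-b > w+1-b$ in cohomological degree $1-b$. One small correction: the precise reference for the cokernel weight bound is \cite{bbd}, 5.4.1 and 5.4.2 (in Section 5.4), not Section 5.3; the ``absorption'' heuristic you sketch is exactly what those statements make rigorous via strict compatibility with the weight filtration.
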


\begin{proof} In the distinguished triangle (\ref{calgg}) introduced before Lemma \ref{lemkeylem} the perverse sheaf ${ j}_{!*}\calf$ is pure of weight $w$ by (\cite{bbd}, Corollaire 5.4.3) and  $\RR{ j}_{*}\calf$ is mixed of weights $\geq w$ by (\cite{bbd}, (5.1.14)). Moreover, (\cite{bbd}, 5.4.1 and 5.4.2) imply that $\calg$ is mixed of weights $>w$. From this the statement follows by applying again (\cite{bbd}, (5.1.14)), this time to the structural morphism $B\to\spec(k)$.
\end{proof}

We are now able to prove:

\begin{prop}\label{keyprop} When the base field $k$ is finite, the maps (\ref{pervpart1}) and (\ref{pervpart2}) defined in the previous section are isomorphisms.
\end{prop}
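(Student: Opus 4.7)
The plan is to use the long exact sequence in hypercohomology attached to the distinguished triangle (\ref{calgg}) applied to the perverse sheaf $\calf:=\R^{2p-1}\pi_*\Q_\ell(p)[b]$ (and analogously for $q$). Since the triangle reads
$$j_{!*}\calf\to \R j_*\calf\to\calg\to j_{!*}\calf[1],$$
the map (\ref{pervpart1}) fits into
$$\Het^{-b}(B,\calg)\to \Het^{1-b}(B,j_{!*}\calf)\to \Het^{1-b}(B,\R j_*\calf)\to \Het^{1-b}(B,\calg).$$
The leftmost term vanishes by Lemma \ref{lemkeylem} (this already yielded the injectivity statement of Proposition \ref{keylem}); the whole task is therefore to prove $\Het^{1-b}(B,\calg)=0$.

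To do this, I would invoke the Hochschild--Serre spectral sequence for the structural morphism $B\to\spec k$. Since $k$ is finite, its absolute Galois group has cohomological dimension $1$, and this spectral sequence degenerates into a short exact sequence
$$0\to H^1(\gal(\bar k/k),\Het^{-b}(\overline B,\overline\calg))\to \Het^{1-b}(B,\calg)\to \Het^{1-b}(\overline B,\overline\calg)^{\gal(\bar k/k)}\to 0.$$
The first term vanishes by Lemma \ref{lemkeylem} applied to the base change $\overline B$ (the vanishing bound $c<1-b$ is geometric). The whole argument therefore reduces to showing that Frobenius has no fixed vectors on $\Het^{1-b}(\overline B,\overline\calg)$.

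For this final step I would use the weight computation Lemma \ref{keylem2}. By Deligne's theorem on smooth proper morphisms, $\R^{2p-1}\pi_*\Q_\ell$ is pure of weight $2p-1$, so $\R^{2p-1}\pi_*\Q_\ell(p)$ is a lisse sheaf pure of weight $-1$; since a lisse pure sheaf of weight $w_0$ on a smooth variety of dimension $b$ shifts to a perverse sheaf of weight $w_0+b$ under $[b]$, the perverse sheaf $\calf$ is pure of weight $w=b-1$. Lemma \ref{keylem2} then tells us that $\Het^{1-b}(\overline B,\overline\calg)$ has weights strictly greater than $w+1-b=0$, so all Frobenius eigenvalues have absolute value $>1$ and in particular none equals $1$. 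Hence the Galois invariants vanish, giving $\Het^{1-b}(B,\calg)=0$ and therefore the surjectivity of (\ref{pervpart1}). The identical argument with $p$ replaced by $q$ handles (\ref{pervpart2}).

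I don't anticipate a serious obstacle: the skeleton is the long exact sequence from (\ref{calgg}), everything nontrivial has already been packaged into Lemmas \ref{lemkeylem} and \ref{keylem2}, and the only delicate point is to keep track of the weight of the perverse shift so that the bound coming from Lemma \ref{keylem2} lands at $>0$ rather than $\geq 0$ (which would only give a weight-monodromy-type statement rather than outright vanishing of invariants).
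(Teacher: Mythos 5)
Your argument is correct and follows essentially the same route as the paper: the long exact sequence from the triangle (\ref{calgg}), the geometric vanishing of Lemma \ref{lemkeylem}, and the weight bound of Lemma \ref{keylem2} with $w=b-1$ fed into the Hochschild--Serre sequence to kill $\Het^{1-b}(B,\calg)$ via vanishing of Frobenius invariants. Your explicit bookkeeping of the weight of the perverse shift is exactly the point the paper notes parenthetically, so there is nothing to add.
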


\begin{dem}  Applying Proposition \ref{keylem} with $\calf=\R^{2p-1}\pi_*\Q_\ell(p) [b]$  and $\calf=\R^{2q-1}\pi_*\Q_\ell(q) [b]$, respectively, we obtain that the maps in question are injective. We prove surjectivity  for (\ref{pervpart1}), the other case being similar. The map in question sits in the exact sequence
 $$
0\to
\Het^{1-b}(B,j_{!*}\R^{2p-1}\pi_*\Q_\ell(p) [b]) \to \Het^{1}(U,\R^{2p-1}\pi_*\Q_\ell(p)) \to H^{1-b}(B, \calg)
$$
with $\calg$ defined as in (\ref{defcalg}). We show that the last group here vanishes.
 Applying Lemma \ref{keylem2} with $\calf=\R^{2p-1}\pi_*\Q_\ell(p) [b]$ we obtain that $\Het^{1-b}(\overline B,\overline\calg)$ has nonzero weights (note that $w=b-1$ in this case); in particular, the invariants of Frobenius are trivial. On the other hand, we know from Lemma \ref{lemkeylem} (applied over $\kbar$) that $\Het^{-b}(\overline B,\overline\calg)=0$. Therefore the Hochschild--Serre sequence
 $$
 H^1(k, \Het^{-b}(\overline B,\overline\calg))\to H^{1-b}(B, \calg)\to H^0(k, \Het^{1-b}(\overline B,\overline\calg))
 $$
 shows that $H^{1-b}(B, \calg)=0$, as claimed.
\end{dem}

\begin{cor}\label{mainfinite}
Proposition \ref{mainprop} holds when $k$ is algebraic over a finite field.
\end{cor}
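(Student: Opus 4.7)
The plan is to reduce Proposition \ref{mainprop} for $k$ algebraic over a finite field to the case of a finite base field (Proposition \ref{keyprop}) by spreading all of the data out to a finite subfield $k_0\subset k$. Since $B$, $U$ and $\pi:\mathcal{X}\to U$ are of finite type over $k$, and since any cycle $z_U$ on $\mathcal{X}$ extending a representative of $\alpha^p$ (and similarly $z'_U$ for $\alpha^q$) is a finite $\Q$-combination of integral closed subschemes of $\mathcal{X}$, hence of finite type over $k$, one can find a finite subfield $k_0\subset k$ to which all these data descend. Denote the descents by $B_0$, $U_0$, $\pi_0:\mathcal{X}_0\to U_0$, $z_{U,0}$, $z'_{U,0}$, set $K_0:=k_0(B_0)$, and let $X_0$ be the generic fibre of $\pi_0$. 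Because $k/k_0$ is algebraic we have $\overline{K_0}=\overline{K}$, so the generic fibres $z_0$, $z'_0$ of $z_{U,0}$, $z'_{U,0}$ remain homologically trivial on $X_0$. The construction of Section~\ref{seccons} (which is valid over any perfect field) performed over $k_0$ then yields classes $\alpha^p_{U_0}\in\Het^{1}(U_0,\R^{2p-1}\pi_{0*}\Q_\ell(p))$ and $\alpha^q_{U_0}\in\Het^{1}(U_0,\R^{2q-1}\pi_{0*}\Q_\ell(q))$ that pull back to $\alpha^p_U$ and $\alpha^q_U$ under the base change $\spec k\to\spec k_0$.

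Next, applying Proposition \ref{keyprop} over the finite field $k_0$ tells us that the analogues of (\ref{pervpart1}) and (\ref{pervpart2}) over $k_0$ are isomorphisms, so the classes $\alpha^p_{U_0}$ and $\alpha^q_{U_0}$ automatically lie in the perverse subspaces $\Het^{1-b}(B_0,\, j_{0!*}\R^{2p-1}\pi_{0*}\Q_\ell(p)[b])$ and $\Het^{1-b}(B_0,\, j_{0!*}\R^{2q-1}\pi_{0*}\Q_\ell(q)[b])$, where $j_0:U_0\hookrightarrow B_0$ is the open inclusion.

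It remains to transfer these containments along the base change $k_0\hookrightarrow k$. Write $k=\varinjlim k_\alpha$ as the filtered union of its finite subextensions of $k_0$. Each base change $B_0\times_{k_0} k_\alpha\to B_0$ is finite étale, $j_{!*}$ commutes with smooth base change (\cite{bbd}, 4.2.5), and $\ell$-adic étale cohomology of constructible sheaves commutes with cofiltered limits of qcqs schemes with affine transition maps; the $\ell$-adic cycle class map is likewise compatible with these base changes. Combining these facts, the containments for $\alpha^p_{U_0}$ and $\alpha^q_{U_0}$ pull back to the desired containments for $\alpha^p_U$ and $\alpha^q_U$. The only non-formal point is the base-change compatibility of $j_{!*}$ along the pro-étale morphism $\spec k\to\spec k_0$; this is the main thing to verify, but it reduces via the limit argument just described to the finite étale case, which is standard.
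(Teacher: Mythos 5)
Your proof is correct and follows essentially the same route as the paper: descend the data to a large enough finite subfield $k_0\subset k$ by a standard limit argument, apply Proposition \ref{keyprop} (which makes the maps (\ref{pervpart1}) and (\ref{pervpart2}) isomorphisms, so homological triviality over $k_0$ is not even needed at that stage), and transfer back along $\spec k\to\spec k_0$. The paper's own proof is just terser---it descends the cohomology class $\alpha^p_U$ directly and leaves the compatibility of $j_{!*}$ and of the various classes with the pro-\'etale base change implicit, which is exactly the point you spell out via the finite \'etale limit argument.
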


\begin{dem}
The case when $k$ is finite is immediate from the previous proposition. Otherwise, a class $\alpha^p_U\in\Het^{1}(U,\R^{2p-1}\pi_*\Q_\ell(p))$ as in Proposition \ref{mainprop} always comes from a class in some cohomology group $\Het^{1}(U_{0},\R^{2p-1}\pi_{0*}\Q_\ell(p))$ where $k_0\subset k$ is a large enough finite field over which $U$, $\pi$ and $\alpha^p_U$ are defined and $U$ (resp. $\pi$) arise by base change from $U_0$ (resp. $\pi_0$). Now apply the finite field case.
\end{dem}

\section{Proof of \ref{mainprop} via specialization}\label{secspec}

We now turn to arbitrary algebraically closed fields.
Suppose $k_0$ is an algebraically closed field, $K_0$ is the function field of a smooth $k_0$-variety $B_0$, and $X_0$ is a \mbox{smooth} projective $K_0$-variety that extends to a smooth projective morphism $\pi:\,{\mathcal X}_0\to U_0$ over an affine open subscheme $U_0\subset B_0$. As in $\S 6$ of \cite{bbd}, we may spread out the situation over the spectrum of a finite type $\Z$-subalgebra $A\subset k_0$ to an affine open immersion $j_A:\, U_A\to B_A$ and a smooth projective morphism $\pi_A:\,{\mathcal X}_A\to U_A$. Given a finite collection $\calf_1,\dots, \calf_r$ of constructible sheaves of $\Z/\ell\Z$-modules on $\X_A$ (with $\ell$ invertible on $A$) and another collection $\calg_1,\dots, \calg_s$ on $B_A$, we may find a dense open subscheme $S\subset\spec(A)$ such that the higher direct image sheaves ${R^q\pi_{S *}(\calf_i}|_{\X_{S}})$ and ${R^q j_{S *}(\calg_j}|_{\X_{S}})$ on $B_{S}$ are all constructible and commute with any base change $S'\to S$. This follows from the generic constructibility and base change theorem of Deligne (\cite{sga4.5}, Th. finitude, Theorem 1.9 or \cite{fu}, Theorem 9.3.1). Now as on (\cite{bbd}, p. 156) we find a strictly henselian discrete valuation ring $R$ with residue field the algebraic closure $\overline\F$ of a finite field and dominating the localization of $A$ at a closed point contained in $S$. Base changing from $A$ to $R$ we obtain morphisms $j_R:\, U_R\to B_R$ and $\pi_R:\,{\mathcal X}_R\to U_R$. Denoting by $\widetilde k$ an algebraic closure of the fraction field of $R$ we may base change the situation to obtain morphisms $\widetilde j:\, \widetilde U\to\widetilde B$ and $\widetilde\pi:\,\widetilde{\mathcal X}\to\widetilde  U$ of $\widetilde k$-schemes. Base changing to the closed point of $\spec(R)$ we obtain a situation with two morphisms $j:\, U\to B$ and $\pi:\,{\mathcal X}\to U$ over the algebraic closure of a finite field as in the previous section.

Now assume that $\X_0$, $U_0$ and $B_0$ are all equipped with stratifications $\T_{\X_0}$, $\T_{U_0}$, $\T_{B_0}$ with smooth geometrically connected strata and for each stratum $Z$ of any of these stratifications we are given a finite collection $\L_Z$ of locally constant irreducible sheaves of $\Z/\ell\Z$-modules satisfying conditions (a)--(c) in (\cite{bbd}, 2.2.10). We can then consider, as in (\cite{bbd}, 6.1.8), the full subcategory $D^b_{\T,\L}(\X_0,\Q_\ell)\subset D^b_c(X_0,\Q_\ell)$ of objects represented by a complex of $\Z_\ell$-sheaves whose reduction mod $\ell$ has cohomology sheaves which, when restricted to any stratum $Z$ of $\T_{\X_0}$, are locally constant and are iterated extensions of objects in $\L_{Z}$. We also consider the analogous subcategories $D^b_{\T,\L}(U_0,\Q_\ell)$ and $D^b_{\T,\L}(B_0,\Q_\ell)$ for $U_0$ and $B_0$, respectively, and assume that the stratifications are fine enough so that the functors $\R j_{0*}$ and $\R\pi_{0*}$ send the corresponding subcategories to each other. Now as in (\cite{bbd}, 6.1.8), after changing $A$ and $S$ if necessary we may spread out the additional data over $A$ such that the base change properties hold for cohomology sheaves of objects of the various categories $D^b_{\T,\L}$. Moreover, after base changing to the generic and special fibres of a well-chosen $R$ as above we obtain equivalences of triangulated categories
\begin{equation}\label{cateq}
D^b_{\T,\L}({\widetilde \X}, \Q_\ell)\leftrightarrow D^b_{\T,\L}(\X_R, \Q_\ell)\leftrightarrow D^b_{\T,\L}(\X, \Q_\ell)
\end{equation}
and similarly for $U_R$ and $B_R$ as in (\cite{bbd}, lemma 1.6.9). In addition, we may choose the stratifications so that these equivalences are compatible via the functors  $\R j_{R*}$ and $\R\pi_{R*}$ and their base changes, and are preserved by Grothendieck's six operations with respect to the maps $j_R$ and $\pi_R$. This latter fact is explained on (\cite{bbd}, p. 154).

The equivalences in (\ref{cateq}) are induced by natural pullback maps $u^*_\X:\, D^b_{\T,\L}(\X_R, \Q_\ell)\to D^b_{\T,\L}({\widetilde \X}, \Q_\ell)$ and $i^*_\X:\, D^b_{\T,\L}(\X_R, \Q_\ell)\rightarrow D^b_{\T,\L}(\X, \Q_\ell)$. Fixing a quasi-inverse ${(u_\X^*)}^{-1}$  for $u^*_\X$ and composing with $i^*_\X$ we obtain a specialization map
$$
sp_\X=i_\X^*{(u_\X^*)}^{-1}:\, D^b_{\T,\L}({\widetilde \X}, \Q_\ell)\to D^b_{\T,\L}(\X, \Q_\ell).
$$
Choosing quasi-inverses ${(u_U^*)}^{-1}$ and ${(u_B^*)}^{-1}$ compatibly we also get specialization maps $sp_U$ and $sp_B$. Up to modifying $A$ and $S$ one last time if necessary, we may assume that these specialization maps respect the perverse $t$-structures restricted to the subcategories above. This is again a consequence of the generic base change theorem, as explained on (\cite{bbd}, p. 154).

\begin{lem}
Assume $\widetilde\calf$ is a perverse sheaf in $D^b_{\T,\L}({\widetilde U}, \Q_\ell)$. There is a commutative diagram
$$
\begin{CD}
 j_{*!}sp_U\widetilde\calf @>>> \R j_*sp_U\widetilde\calf \\
 @VVV  @VVV\\
 sp_B{\widetilde j}_{*!}\widetilde\calf @>>> sp_B\R{\widetilde j}_*\widetilde\calf
\end{CD}
$$

\noindent of morphisms of perverse sheaves in $D^b_{\T,\L}({B}, \Q_\ell)$.
\end{lem}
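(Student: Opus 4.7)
The plan is to deduce the lemma from two facts already established in the paragraphs preceding it: the specialization maps commute with $j_!$ and $\R j_*$ (because the equivalences in (\ref{cateq}) are compatible with the six functors applied to $j_R$), and they are $t$-exact for the perverse $t$-structure. Together these will imply that the specialization maps also commute with $j_{!*}$ in a way compatible with the canonical morphism $j_{!*}\to\R j_*$, which is exactly the content of the square.

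First I would fix compatible choices of quasi-inverses $(u_U^*)^{-1}$ and $(u_B^*)^{-1}$ and record the resulting canonical natural isomorphisms
$$
sp_B\circ\R\widetilde{j}_*\stackrel{\sim}{\lra}\R j_*\circ sp_U, \qquad sp_B\circ\widetilde{j}_!\stackrel{\sim}{\lra}j_!\circ sp_U,
$$
which, being induced from the same system of equivalences, intertwine the canonical natural transformation $\widetilde{j}_!\to\R\widetilde{j}_*$ with the corresponding one $j_!\to\R j_*$. Recall now that for a perverse sheaf $\widetilde\calf$ the intermediate extension $\widetilde{j}_{!*}\widetilde\calf$ is defined as the image, in the abelian category of perverse sheaves, of the canonical morphism ${}^p{\mathcal H}^0\widetilde{j}_!\widetilde\calf\to\R\widetilde{j}_*\widetilde\calf$ (the target being perverse by Corollary III.6.2 of \cite{kiwe}, since $\widetilde{j}$ is an affine open immersion). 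Since $sp_B$ is a $t$-exact triangulated equivalence for the perverse $t$-structure, its restriction to the hearts is an exact equivalence of abelian categories and therefore commutes with the formation of images. Applied to the map above, this yields a canonical isomorphism $j_{!*}sp_U\widetilde\calf\stackrel{\sim}{\lra}sp_B\widetilde{j}_{!*}\widetilde\calf$, and the commutativity of the lemma's square then drops out from the naturality of the image factorization inside the arrow $j_!\to\R j_*$.

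The main --- and essentially only --- subtlety is bookkeeping: one must choose the quasi-inverses $(u_?^*)^{-1}$ compatibly enough so that the natural isomorphisms above assemble strictly into a commutative square rather than being compatible merely up to higher coherence data. This is precisely what the careful set-up of the preceding paragraphs (in particular the compatibility of (\ref{cateq}) with the six operations for $j_R$ and its preservation under base change) has been arranged to provide, so the proof ultimately amounts to unwinding the definitions and invoking the fact that an exact equivalence of abelian categories preserves images of morphisms.
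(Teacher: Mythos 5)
Your proposal is essentially correct and follows the same route as the paper: establish the compatibility of the specialization functors with $\R j_!$ and $\R j_*$ (intertwining the forget-support maps), then use the description of the intermediate extension as the image of $\R j_!\widetilde\calf\to\R j_*\widetilde\calf$ in the heart of the perverse $t$-structure, together with $t$-exactness of the specialization functors, to transport that image. The only place where the paper is more explicit than you are is the verification that the two natural isomorphisms $sp_B\circ\R\widetilde j_!\cong\R j_!\circ sp_U$ and $sp_B\circ\R\widetilde j_*\cong\R j_*\circ sp_U$ really do intertwine the forget-support transformations: the paper writes out the two commutative squares (\ref{2diag}) built from base-change maps for $\R j_!$ and $\R j_*$, citing the construction of the compact-support base-change morphism in \cite{fu} for their compatibility, and then inverts the base-change isomorphisms coming from Deligne's generic base change theorem to reach the diagram (\ref{specdiag}); your phrase ``being induced from the same system of equivalences'' compresses exactly this step, and your acknowledgement that the preceding set-up is there to make the compatibilities strict is the right instinct.
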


\begin{dem}
Start with the commutative diagrams of morphisms of functors
\begin{equation}\label{2diag}
\begin{CD}
i_B^*\R j_{R!} @>>> \R j_!i^*_U \\
@VVV @VVV \\
i_B^*\R j_{R*} @>>> \R j_*i^*_U
\end{CD}
\qquad
\begin{CD}
u_B^*\R j_{R!} @>>> \R j_!u^*_U \\
@VVV @VVV \\
u_B^*\R j_{R*} @>>> \R j_*u^*_U.
\end{CD}
\end{equation}
Here the upper rows are base change maps for higher direct image functors with compact support, the lower rows are base change maps for usual higher direct images and the vertical maps are forget support maps. Commutativity follows from the construction of the compact support base change maps as in the proof of (\cite{fu}, 7.4.4 (i)). Applying the functor $sp_B=i_B^*{(u_B^*)}^{-1}$ to the second diagram on the left, the functor ${(u_U^*)}^{-1}$ to both diagrams on the right and splicing the resulting two diagrams together we obtain the commutative diagram
$$
\begin{CD}
\R j_!sp_U  @>>> \R j_*sp_U \\
@AAA @AAA \\
i_B^*\R j_{R!}{(u_U^*)}^{-1} @>>> i_B^*\R j_{R*}{(u_U^*)}^{-1}\\
@VVV @VVV \\
sp_B\R\widetilde j_!  @>>> sp_B\R\widetilde j_*.
\end{CD}
$$
Now the horizontal arrows of the first diagram of (\ref{2diag}) are isomorphisms because, as noted above, the equivalences induced by the functors $i_U^*$ and $i_B^*$ are compatible with the functors $\R j_*$ and $\R j_!$ (this, of course, ultimately boils down to Deligne's generic base change theorem). Hence we may invert the upper vertical arrows in the above diagram and obtain  a commutative diagram of functors:
\begin{equation}\label{specdiag}
\begin{CD}
\R j_!sp_U  @>>> \R j_*sp_U \\
@VVV @VVV \\
sp_B\R\widetilde j_!  @>>> sp_B\R\widetilde j_*
\end{CD}
\end{equation}
Now given a perverse sheaf $\widetilde\calf$ in $D^b_{\T,\L}({\widetilde U}, \Q_\ell)$, the specialization $sp_U\calf$ is again perverse as noted above, and perversity is also preserved by the functors $\R\widetilde j_*$ and $\R\widetilde j_!$ (\cite{kiwe}, Corollary III.6.2). Recalling that by definition $\widetilde j_{!*}\widetilde\calf\cong {\rm Im}({}^p{\mathcal H}^0(\R\widetilde j_!\widetilde\calf)\to {}^p{\mathcal H}^0(\R\widetilde j_*\widetilde\calf))$ and similarly for $j_{!*}sp_U\calf$, we see that the diagram induces a map
$$
j_{!*}sp_U\calf\to sp_B\widetilde j_{!*}\widetilde\calf
$$
using again that perversity is preserved under specialization. This is the left vertical map of the diagram in lemma. The right vertical map the same as in diagram (\ref{specdiag}). Finally, recalling that $\widetilde j^*\widetilde j_{!*}\calf=\calf$, we obtain the horizontal maps of the diagram of the lemma as an adjunction map and the specialization of an adjunction map, respectively, and commutativity results from the construction.
\end{dem}

Applying the functor $H^q(B,\_\_)$ to the above diagram we obtain:

\begin{cor}\label{corspec} In the situation of the lemma set $\calf:=sp_U\widetilde\calf$. There is a commutative diagram of cospecialization maps
$$
\begin{CD}
\Het^{q}(B,j_{!*}\calf) @>>> \Het^{q}(U,\calf)\\
@VVV @VVV \\
\Het^{q}(\widetilde B,{\widetilde j}_{!*}\widetilde\calf) @>>> \Het^{q}(\widetilde U,\widetilde\calf)
\end{CD}
$$
for all $q\geq 0$.
\end{cor}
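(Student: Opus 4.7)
The plan is to apply the functor $H^q(B,-)$ to the commutative square of perverse sheaves on $B$ furnished by the preceding lemma. Functoriality immediately produces a commutative square whose nodes are $H^q(B, j_{!*}sp_U\widetilde\calf)$, $H^q(B, \R j_*sp_U\widetilde\calf)$, $H^q(B, sp_B\widetilde j_{!*}\widetilde\calf)$, and $H^q(B, sp_B\R\widetilde j_*\widetilde\calf)$; the remaining task is to reconcile these with the four groups displayed in the statement. Using the hypothesis $\calf=sp_U\widetilde\calf$, the top-left node already matches the statement, and for the top-right node I invoke the standard derived-functor identity $H^q(B,\R j_*\calf)\cong H^q(U,\calf)$ (and likewise for $\widetilde j$ over $\widetilde B$).

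The substantive identification concerns the bottom row. Writing $sp_B=i_B^*(u_B^*)^{-1}$ and setting $\mathcal H_R:=(u_B^*)^{-1}\widetilde{\mathcal H}$ in $D^b_{\T,\L}(B_R,\Q_\ell)$ for $\widetilde{\mathcal H}\in\{\widetilde j_{!*}\widetilde\calf,\R\widetilde j_*\widetilde\calf\}$, the base change properties for objects of $D^b_{\T,\L}$ arranged in the paragraph introducing the equivalences (\ref{cateq}) supply natural isomorphisms
\[
H^q(B, sp_B\widetilde{\mathcal H})=H^q(B, i_B^*\mathcal H_R)\cong H^q(B_R,\mathcal H_R)\cong H^q(\widetilde B, u_B^*\mathcal H_R)=H^q(\widetilde B,\widetilde{\mathcal H}).
\]
These are exactly the cospecialization maps announced in the statement, and the same construction with $B$ replaced by $U$ and $sp_B$ by $sp_U$ handles the corresponding nodes over $U$.

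Commutativity under these identifications is essentially formal: every morphism in the square of the lemma is built from base change transformations, adjunction units and counits for $(j_!,j^*)$ and $(j^*,j_*)$, and the canonical natural transformation $\R j_!\to \R j_*$, each of which is natural with respect to the pullback functors $i_B^*, i_U^*, u_B^*, u_U^*$. The only subtlety to watch for is that the chosen quasi-inverses $(u_B^*)^{-1}$ and $(u_U^*)^{-1}$ must intertwine with $\R j_*$, $\R j_!$ and $j_{!*}$, but this compatibility was built into the construction of the subcategories $D^b_{\T,\L}$ in the paragraph preceding the equivalences (\ref{cateq}). I do not foresee any serious obstacle; the proof reduces to routine bookkeeping once that framework is in place.
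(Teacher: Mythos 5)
Your proposal is correct and matches the paper's own (one-line) proof: apply $\Het^{q}(B,\underline{\ \ })$ to the commutative square of the lemma and identify the four corners via the Leray identity $\Het^q(B,\R j_*\calf)\cong\Het^q(U,\calf)$ and the cohomological compatibilities of $sp_B$, $sp_U$ built into the $D^b_{\T,\L}$ spreading-out framework. The paper leaves these identifications implicit, while you spell them out (including the cospecialization chain through $H^q(B_R,\mathcal H_R)$), which is exactly the intended argument.
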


Now let us return to our original algebraically closed base field $k_0$. The field $\widetilde k$ constructed above is algebraically closed of the same characteristic, so we may assume that at least one of the inclusions $k_0\subset \widetilde k$ or $\widetilde k\subset k_0$ holds. We stick to the first case; the second one is handled similarly.

\begin{lem}\label{bcdiag} Assume $k_0\subset \widetilde k$. For every perverse sheaf $\calf_0$ in $D^b(U_0, \Q_\ell)$ there is a commutative diagram
$$
\begin{CD}
\Het^{q}(B_0,j_{0!*}\calf_0) @>>> \Het^{q}(U_0,\calf_0)\\
@V{\cong}VV @VV{\cong}V \\
\Het^{q}(\widetilde B,{\widetilde j}_{!*}\widetilde\calf) @>>> \Het^{q}(\widetilde U,\widetilde\calf)
\end{CD}
$$
for all $q\geq 0$, where $\widetilde\calf$ is the pullback of $\calf_0$ to $\widetilde U$.
\end{lem}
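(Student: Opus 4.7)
The plan is to deduce the lemma from two standard base-change principles: the invariance of $\ell$-adic étale cohomology of finite-type schemes under extension of algebraically closed base fields of the same characteristic, and the commutation of the intermediate extension functor with such a pullback. Write $f\colon\spec\widetilde k\to\spec k_0$ for the morphism induced by the fixed embedding $k_0\subset\widetilde k$, and $f_B,f_U$ for the induced base-change morphisms to $\widetilde B=B_0\times_{k_0}\widetilde k$ and $\widetilde U=U_0\times_{k_0}\widetilde k$.

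The first step is to check that for any constructible $\Q_\ell$-complex $\calg_0$ on a finite-type $k_0$-scheme $Y_0$, the natural cospecialization map $\Het^q(Y_0,\calg_0)\to\Het^q(\widetilde Y,f_Y^*\calg_0)$ is an isomorphism for all $q$. This is the well-known invariance of étale cohomology under extension of algebraically closed base fields: one writes $\widetilde k$ as the filtered colimit of its finitely generated $k_0$-subalgebras $A_\alpha$, spreads $\calg_0$ out over each $A_\alpha$, and invokes continuity of étale cohomology together with the generic constructibility and base-change results of Deligne (\cite{sga4.5}, Th.~finitude, Theorem 1.9; \cite{fu}, Theorem 9.3.1) already used in this section. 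Applied to $Y_0=B_0$ with $\calg_0=j_{0!*}\calf_0$ and to $Y_0=U_0$ with $\calg_0=\calf_0$, this produces the vertical isomorphisms of the diagram.

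The second step is to identify the horizontal arrow of the lower row with the pullback of that of the upper row. Since $f$ is a cofiltered limit of smooth morphisms of relative dimension $0$ between $k_0$-schemes, the pullback $f_B^*$ is $t$-exact for the perverse $t$-structure and commutes with both $\R{j_0}_!$ and $\R{j_0}_*$ by the proper and smooth base-change theorems. Because $j_{0!*}\calf_0$ is by definition the image of the canonical morphism ${}^p\mathcal{H}^0(\R{j_0}_!\calf_0)\to{}^p\mathcal{H}^0(\R{j_0}_*\calf_0)$, we obtain a canonical identification $f_B^*j_{0!*}\calf_0\cong\widetilde j_{!*}\widetilde\calf$ under which this canonical map pulls back to the canonical map $\widetilde j_{!*}\widetilde\calf\to\R\widetilde j_*\widetilde\calf$. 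Naturality of the base-change isomorphism applied to this morphism of perverse sheaves then yields commutativity of the square after taking $\Het^q(B_0,-)$.

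The main technical obstacle is to ensure that all of these compatibilities survive passage to the non-finite-type base change $k_0\subset\widetilde k$. This is handled by the same spreading-out strategy used in the earlier part of this section: one expresses the situation over a finitely generated $\Z$-subalgebra of $k_0$, works in the appropriate subcategories $D^b_{\T,\L}$ of constructible complexes cut out by stratifications with locally constant cohomology, invokes Deligne's generic base-change theorem to ensure that $\R j_*$, $j_{!*}$, and the perverse truncation all commute with the relevant pullbacks on these subcategories, and finally passes to the colimit over such $\Z$-subalgebras. The resulting diagram is nothing other than that of the lemma.
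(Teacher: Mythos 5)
Your proposal follows essentially the same route as the paper's proof: the vertical isomorphisms come from invariance of \'etale cohomology under extension of algebraically closed base fields (the paper simply cites \cite{fu}, Corollary 7.7.3, whereas you sketch the underlying colimit argument), and commutation of $j_{0!*}$ with the base change $f_B^*$ is reduced to the spreading-out machinery already set up over the $\Z$-subalgebra $A$, invoking Deligne's generic base change theorem and preservation of the perverse $t$-structure, which is exactly the paper's appeal to the argument on (\cite{bbd}, p.\ 158) ``\emph{mutatis mutandis}.''

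One imprecision worth flagging: the assertion in your second step that $f\colon \spec\widetilde k\to\spec k_0$ is ``a cofiltered limit of smooth morphisms of relative dimension $0$'' is not accurate as stated --- the finitely generated $k_0$-subalgebras $A_\alpha\subset\widetilde k$ have positive transcendence degree in general, so the maps $\spec A_\alpha\to\spec k_0$ are (generically) smooth of positive relative dimension. The intended conclusion, that $f_B^*$ is $t$-exact for the perverse $t$-structure, is nevertheless correct: the perverse $t$-structure on $\widetilde B$ is taken with respect to $\widetilde k$-dimension, and base change by a field extension preserves dimensions of strata, so perversity constraints match. Your final paragraph, which routes the justification through the categories $D^b_{\T,\L}$ and the $\Z$-subalgebra spreading-out, is what actually carries the proof, and that part is sound and matches the paper.
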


\begin{dem}
Let $e:\,\spec(\widetilde k)\to \spec(k_0)$ be the natural morphism, so that $\widetilde\calf=e^*\calf_0$. The base change theorem for extensions of algebraically closed fields of characteristic prime to $\ell$ (see e.g. \cite{fu}, Corollary 7.7.3) gives us a commutative diagram
$$
\begin{CD}
\Het^{q}(B_0,j_{0!*}\calf_0) @>>> \Het^{q}(U_0,\calf_0)\\
@V{\cong}VV @VV{\cong}V \\
\Het^{q}(\widetilde B,e^*j_{0!*}\calf_0) @>>> \Het^{q}(\widetilde U,e^*\calf_0)
\end{CD}
$$
so it remains to identify the groups in the lower left corner. This follows by a similar, but simpler, argument as in the previous proof: in our construction the $\Z$-algebra $A$ and the open set $S$ were chosen so that the derived direct image functors (with or without compact support) commute with arbitrary base change and then the perverse $t$-structure is also preserved as on (\cite{bbd}, p. 158). The above argument {\em mutatis mutandis} then shows that the intermediate extension functor also commutes with the base change $e^*$.
\end{dem}

\begin{proof}[First proof of Proposition \ref{mainprop}] We return to the situation at the beginning of this section: $k_0$ is an algebraically closed field, $K_0$ is the function field of a smooth $k_0$-variety $B_0$, and $X_0$ is a \mbox{smooth} projective $K_0$-variety that extends to a smooth projective morphism $\pi:\,{\mathcal X}_0\to U_0$ over an affine open subscheme $U_0\subset B_0$. We moreover take a homologically trivial cycle class $\alpha_0^p\in {CH^p_{\rm hom}(X_0)}_\Q$, extend a representative cycle $z_0$ to a cycle $z_{U_0}$ over $U_0$ (changing $U_0$ if necessary) and take its cycle class ${\rm cl}(z_{U_0})\in\Het^{2p}(\X_0,\Q_\ell(p))$. Now we perform the spreading out procedure of the beginning of this section in such a way that we spread out $z_{U_0}$ over $A$ as well (modifying $A$ if necessary), base change it to a cycle $z_{U_R}$ on $\X_R$ and pull it back to a cycle $z_U$ on the special fibre $\X$. Note that under this procedure the cycle class ${\rm cl}(z_{U})\in\Het^{2p}(\X,\Q_\ell(p))$ cospecializes to ${\rm cl}(z_{U_0})$. This follows from the contravariant functoriality of the cycle class (usually proven via comparison with Chern classes as in \cite{milne}, Corollary V.10.7; note that Milne works over an algebraically closed field but the argument extends to a general base using the general theory of Chern classes as found in \cite{sga5}, Expos\'e VII).

Since $\alpha^p_0$ is homologically trivial, the class ${\rm cl}(z_{U_0})$ gives rise to a cohomology class $\alpha^p_{U_0}\in \Het^{1}(U_0,\R^{2p-1}\pi_{0*}\Q_\ell(p))$ via the truncation procedure recalled before Proposition \ref{mainprop}; similarly, ${\rm cl}(z_{U})$ gives rise to a cohomology class $\alpha^p_{U}\in \Het^{1}(U,\R^{2p-1}\pi_{*}\Q_\ell(p))$. Since truncations and pullbacks are compatible, the two classes correspond under the cospecialization map
$$
\Het^{1}(U,\R^{2p-1}\pi_*\Q_\ell(p))\to \Het^{1}(\widetilde U,\R^{2p-1}\widetilde\pi_*\Q_\ell(p))\cong \Het^{1}(U_0,\R^{2p-1}\pi_{0*}\Q_\ell(p))
$$
which exists by the generic base change theorem by our choice of the ring $R$. Now applying Corollary \ref{corspec} and Lemma \ref{bcdiag} with $\calf_0=\R^{2p-1}\pi_{0*}\Q_\ell(p)[b]$ and $q=1-b$ we obtain a commutative diagram
$$
\begin{CD}
\Het^{1-b}(B,j_{!*}\R^{2p-1}\pi_*{\Q}_\ell(p) [b]) @>>> \Het^{1}(U,\R^{2p-1}\pi_*{\Q}_\ell(p))\\
@VVV @VVV \\
\Het^{1-b}(B_0,j_{!*}\R^{2p-1}\pi_{0*}{\Q}_\ell(p) [b]) @>>> \Het^{1}( U_0,\R^{2p-1}\pi_{0*}{\Q}_\ell(p))
\end{CD}
$$
where the right vertical cospecialization map sends $\alpha^p_U$ to $\alpha^p_{U_0}$ as noted above. By Corollary \ref{mainfinite} the class $\alpha^p_U$ comes from a class in $\Het^{1-b}(B,j_{!*}\R^{2p-1}\pi_*\Q_\ell(p) [b])$. But then by commutativity of the diagram $\alpha^p_{U_0}$ comes from $\Het^{1-b}(B_0,j_{!*}\R^{2p-1}\pi_{0*}{\Q}_\ell(p) [b])$, as was to be shown.
\end{proof}

\section{Proof of \ref{mainprop} via decomposition}\label{secdec}

We begin with a general proposition that may be considered as an `absolute' version of Proposition \ref{mainprop}. Let again $k$ be a general algebraically closed field and $\X$ a smooth $k$-variety. Choose an open immersion $j_\X:\, \X\hookrightarrow\Y$ with dense image in a $k$-variety $\Y$. For what follows only the $k$-dimension of $\X$ will be relevant; let us denote it by $N$. Denote furthermore by $\ICY:=j_{\X!*}(\Q_\ell[N])$ the corresponding intersection complex. Recall that $\ICY=j_{\V!*}(\Q_\ell[N])$ for any other open immersion $j_\V:\, \V\hookrightarrow\Y$ with $\V$ nonempty and smooth.

\begin{prop}\label{ICclass}
The cycle class in $\Het^{2p}(\X,\Q_\ell(p))$ of a codimension $p$ cycle on $\X$ lies in the image of the restriction map $\Het^{2p-N}(\Y,\ICY(p))\to \Het^{2p}(\X,\Q_\ell(p))$.
\label{lcc}
\end{prop}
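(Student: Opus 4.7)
My plan is to use the Decomposition Theorem applied to a resolution or alteration $\pi \colon \Y' \to \Y$ with $\Y'$ smooth, in order to realize $\ICY$ as a direct summand of $\R\pi_*\Q_\ell[N]$ and thereby lift the cycle class on $\X$ to a class in $\Het^{2p-N}(\Y, \ICY(p))$ along this summand. In characteristic $0$, Hironaka's resolution of singularities furnishes a proper birational $\pi$ that is an isomorphism over the smooth locus, in particular over $\X$; in positive characteristic, de Jong's alterations supply a proper surjective $\pi$ with $\Y'$ smooth of pure dimension $N$ and $\pi$ generically finite of some degree $e \geq 1$.

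Applying the Decomposition Theorem of BBD to the proper map $\pi$ shows that $\R\pi_*\Q_\ell[N]$ is a direct sum of shifts of simple perverse sheaves on $\Y$, and $\ICY$ appears as such a summand. Indeed, on any smooth open $U \subseteq \Y$ over which $\pi$ is finite \'etale, the local system $\pi_{U*}\Q_\ell$ contains $\Q_\ell$ as a direct summand via the normalized trace $\mathrm{tr}/e$, so that $\ICY = j_{U!*}(\Q_\ell[N])$ appears as a direct summand of ${}^{p}H^0(\R\pi_*\Q_\ell[N])$, hence of $\R\pi_*\Q_\ell[N]$ itself. Writing $\R\pi_*\Q_\ell[N] \cong \ICY \oplus R'$ and invoking the Leray isomorphism, we obtain a projection
\[ \Het^{2p}(\Y', \Q_\ell(p)) \cong \Het^{2p-N}(\Y, \R\pi_*\Q_\ell[N](p)) \twoheadrightarrow \Het^{2p-N}(\Y, \ICY(p)). \]
Letting $\widetilde Z \subset \Y'$ be the Zariski closure of $\pi^{-1}(Z)$, its cycle class $[\widetilde Z] \in \Het^{2p}(\Y', \Q_\ell(p))$ projects to a class $c \in \Het^{2p-N}(\Y, \ICY(p))$.

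It then remains to verify that $c$, restricted to $\X$ via the natural map $\ICY \to \R j_{\X *}\Q_\ell[N]$, equals $[Z]$. Since $[\widetilde Z]|_{\pi^{-1}(\X)} = \pi_\X^*[Z]$ (writing $\pi_\X := \pi|_{\pi^{-1}(\X)}$) and the restriction of the BBD summand projection to $\X$ corresponds to the normalized trace $\mathrm{tr}/e$, the projection formula $\pi_{\X*}\pi_\X^*[Z] = e[Z]$ delivers $c|_\X = [Z]$. The principal difficulty lies in this last step. When $\pi$ is an isomorphism over $\X$ (achievable in characteristic $0$ via Hironaka), $R'|_\X = 0$, the summand projection becomes the identity on $\X$, and the verification is transparent. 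In general one must further ensure $\pi$ is finite over $\X$ (so that $\R\pi_{\X*}\Q_\ell = \pi_{\X*}\Q_\ell$ carries the canonical trace map) and then match the BBD splitting with the trace decomposition of $\pi_{\X*}\Q_\ell$, which is where the argument becomes delicate.
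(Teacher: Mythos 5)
You follow the same route as the paper (de Jong alteration, decomposition theorem, m\'ethode de la trace to split $\ICY$ off $\R\pi_*\Q_\ell[N]$, then project the class of a cycle closed up on the alteration), but the step you yourself flag as ``delicate'' is a genuine gap, and the remedies you propose do not close it. In positive characteristic you cannot ``ensure $\pi$ is finite over $\X$'': de Jong gives a proper, generically finite alteration that is finite \'etale only over some dense open $\V\subset\Y$, and nothing lets you force that locus to contain the prescribed open $\X$. Moreover the splitting furnished by the decomposition theorem is non-canonical, so your assertion that its restriction to $\X$ ``corresponds to the normalized trace'' is unjustified beyond $\X\cap\V$ --- yet your computation of $c|_\X$ rests exactly on this identification. (Your characteristic-$0$ shortcut via Hironaka is fine, but the proposition is needed over an arbitrary algebraically closed field.) A smaller inaccuracy: taking $\widetilde Z$ to be the closure of the set-theoretic preimage $\pi^{-1}(Z)$ can produce excess-dimensional components when $\pi$ has positive-dimensional fibres over points of $Z$; one should instead close up a cycle on $\pi^{-1}(\X)$ (smooth, being open in $\Y'$) representing the pulled-back class $\pi_\X^*[Z]$.

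The missing idea is a rigidity argument that makes all of this unnecessary. Let $\rho_\X$ denote the map on cohomology over $\X$ induced by restricting the summand projection $\R\pi_*\Q_\ell(p)[N]\to\ICY(p)$ to $\X$, where $\ICY(p)|_\X=\Q_\ell(p)[N]$. The composite $\rho_\X\circ\pi_\X^*$ is induced by a morphism $\Q_\ell(p)[N]\to\Q_\ell(p)[N]$ in $D^b(\X,\Q_\ell)$, i.e.\ by an endomorphism of the constant sheaf; since $\X$ is integral this is multiplication by a scalar, and the scalar is nonzero because over the dense open $\X\cap\V$ the composite is multiplication by $\deg(\pi_\V)$ by the trace method. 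Hence, with $c_2$ the class of the closed-up cycle on $\Y'$, the commutative restriction diagram gives $j_\X^*(\rho(c_2))=\rho_\X(\pi_\X^*[Z])=\lambda\,[Z]$ with $\lambda\neq 0$, so $[Z]$ lies in the image of $\Het^{2p-N}(\Y,\ICY(p))\to\Het^{2p}(\X,\Q_\ell(p))$ --- with no normalization of the splitting and no control of $\pi$ over all of $\X$. This is exactly how the paper concludes, and it is the step your proposal leaves open.
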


To prove the proposition, we invoke de Jong's theorem \cite{dejong} on alterations to find  a proper and generically \'etale morphism $\psi:\W\to \Y$  over $k$ such that $\W$ is regular.

\begin{lem}\label{lemic} The complex $\ICY$ identifies with a direct summand of
$\R\psi_*(\Q_{\ell}[N])$ in $D^b(\Y, \Q_\ell).$
\end{lem}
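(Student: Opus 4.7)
The plan is to apply the decomposition theorem to the complex $\R\psi_*(\Q_\ell[N])$ and recognize $\ICY$ as one of its summands via a trace argument on a dense open.

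First, since $\W$ is regular of dimension $N$, the shifted constant sheaf $\Q_\ell[N]$ is a simple perverse sheaf of geometric origin on $\W$. Because $\psi$ is proper, I would invoke the decomposition theorem (in the form valid over our algebraically closed field: \cite{bbd} combined with the specialization argument in positive characteristic, or \cite{decmi}, \cite{saito} in characteristic zero) to obtain a splitting
$$
\R\psi_*(\Q_\ell[N]) \;\cong\; \bigoplus_i {}^p\mathcal{H}^i(\R\psi_*(\Q_\ell[N]))[-i]
$$
in $D^b(\Y,\Q_\ell)$, in which each ${}^p\mathcal{H}^i$ is a semisimple perverse sheaf on $\Y$, hence a finite direct sum of intermediate extensions of irreducible lisse sheaves on smooth locally closed subschemes of $\Y$.

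Next, I would use the generic étaleness of $\psi$ to choose a dense open $\V \subset \Y$, which I may further shrink to lie inside $\X$ and in the smooth locus of $\Y$, over which $\psi$ is finite étale of some degree $d \geq 1$. On such a $\V$, the complex $\R\psi_*(\Q_\ell[N])$ restricts to $\psi_*(\Q_\ell)[N]|_\V$, a lisse sheaf in perverse degree $0$. The unit map $\Q_\ell|_\V \to \psi_*(\Q_\ell)|_\V$ followed by the trace map $\psi_*(\Q_\ell)|_\V \to \Q_\ell|_\V$ is multiplication by $d$, which is invertible in $\Q_\ell$. Thus $\Q_\ell[N]|_\V$ splits off as a direct summand of $(\R\psi_*(\Q_\ell[N]))|_\V$, and in particular of ${}^p\mathcal{H}^0(\R\psi_*(\Q_\ell[N]))|_\V$.

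Finally, I would conclude by the structure theorem for semisimple perverse sheaves: every simple summand of a semisimple perverse sheaf on $\Y$ is determined by its restriction to any given dense open, via the intermediate extension functor. Concretely, the summand $\Q_\ell[N]|_\V$ just exhibited forces $\ICY = j_{\V!*}(\Q_\ell[N]|_\V)$ to appear as a direct summand of ${}^p\mathcal{H}^0(\R\psi_*(\Q_\ell[N]))$, and hence of $\R\psi_*(\Q_\ell[N])$ by the decomposition above. The only genuine obstacle is the appeal to the decomposition theorem in the required generality over an arbitrary algebraically closed base; once granted, the remaining argument is a standard unit-trace computation combined with the correspondence between simple summands of a semisimple perverse sheaf and those of its restriction to any dense open subscheme.
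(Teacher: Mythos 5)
Your proof is correct and takes essentially the same route as the paper: apply the decomposition theorem to $\R\psi_*(\Q_{\ell}[N])$, split off $\Q_{\ell}$ by the m\'ethode de la trace over a dense open $\V$ where $\psi$ is finite \'etale, and recover $\ICY$ as a summand via intermediate extension from $\V$ (the paper merely performs these steps in the opposite order, first extracting $j_{\V !*}\R\psi_{\V *}(\Q_{\ell}[N])$ and then applying the trace splitting). The only caveat is your phrase that every simple summand is ``determined by its restriction to any given dense open'' --- summands supported off $\V$ restrict to zero --- but the way you actually use it (a simple summand of the restriction to $\V$ lifts to a full-support simple summand, namely its intermediate extension) is the correct statement, so the argument stands.
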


\begin{dem} By the decomposition theorem (\cite{bbd}, Theorem 6.2.5 together with note 77, p. 176) the complex $\R\psi_*(\Q_{\ell}[N])$ decomposes as a direct sum of shifts of  simple perverse sheaves. Fix a smooth open subscheme $\V\subset \Y$ such that the restriction $\psi_\V:\psi^{-1}(\V)\to \V$ of the alteration is finite \'etale, and denote by $j_\V:\, \V\to \Y$ the inclusion map. Applying the decomposition theorem to the perverse sheaf $\R\psi_{\V*}(\Q_{\ell}[N])$ on $\V$ and comparing the two decompositions we see that $j_{\V!*}\R\psi_{\V*}(\Q_{\ell}[N])$ is a direct summand of $\R\psi_{*}(\Q_{\ell}[N])$.

Now since $\psi_\V$ is finite \'etale, there is a map $\R\psi_{\V *}\psi^*_\V\Q_{\ell}\to\Q_{\ell}$ splitting the adjunction map $\Q_{\ell}\to \R\psi_{\V *}\psi_\V^*\Q_{\ell}$  up to multiplication by the degree of $\psi_\V$ according to the 'm\'ethode de la trace' (see e.g. Stacks Project, Tag 58.65). Observing that $\Q_{\ell}\cong  \psi_\V^*\Q_\ell$, we may thus identify $\Q_\ell$ with a direct summand of $\R\psi_{\V *}\Q_{\ell}$, whence $\ICY=j_{\V !*}(\Q_\ell[N])$ is a direct summand of $j_{\V!*}\R\psi_{\V*}(\Q_{\ell}[N])$. Putting this together with the previous paragraph we obtain the statement of the lemma.
\end{dem}

\begin{proof}[Proof of Proposition \ref{lcc}]
Let $c$ be a cycle class in $\Het^{2p}(\X,\Q_\ell(p))$, and let
$c_1$ be its image in $\Het^{2p}(\psi^{-1}(\X),\Q_\ell(p))$.
Let $c_2$ be an extension of $c_1$ to $\Het^{2p}(\W,\Q_\ell(p))$ (one obtains such an extension by taking the Zariski closure in $\W$ of a representative of $c_1$). Now let
$$
\lambda:\R\psi_*(\Q_{\ell}(p)[N])\to \ICY(p)
$$ be the projection map obtained from Lemma \ref{lemic} after twisting by $p$. Passing to cohomology over $\Y$ we obtain a map
$$
\rho:\Het^{2p-N}(\Y,\R\psi_*(\Q_{\ell}(p)[N]))\to \Het^{2p-N}(\Y,\ICY(p)(p))
$$
whose source may be identified with $\Het^{2p-N}(\W,\Q_{\ell}(p)[N])$. By restricting over $\X$ we obtain a commutative diagram
$$
\begin{CD}
\Het^{2p-N}(\W,\Q_{\ell}(p)[N]) @>{\rho}>> \Het^{2p-N}(\Y,\ICY(p)(p))\\
@V{j_\X^*}VV @VV{j_\X^*}V \\
\Het^{2p-N}(\psi^{-1}(\X),\Q_{\ell}(p)[N]) @>{\rho_\X}>> \Het^{2p-N}(\X,\Q_{\ell}(p)[N])
\end{CD}
$$
Now consider the pullback map
$$\psi_\X^*:\Het^{2p-N}(\X,\Q_{\ell}(p)[N])\to \Het^{2p-N}(\X,\R\psi_{\X*}\Q_{\ell}(p)[N])\cong  \Het^{2p-N}(\psi^{-1}(\X),\Q_{\ell}(p)[N])$$ where the first map is induced by the adjunction map $\Q_{\ell}(p)[N]\to \R\psi_{\X*}(\Q_{\ell}(p)[N])$ and $\psi_\X$ is the restriction of $\psi$ above $\X$. The composite map
$$
\rho_\X\circ\psi_\X^*:\, \Het^{2p-N}(\X,\Q_{\ell}(p)[N])\to \Het^{2p-N}(\X,\Q_{\ell}(p)[N])
$$
is induced by a map $\Q_{\ell}(p)[N]\to \Q_{\ell}(p)[N]$ in $D^b(\X, \Q_\ell)$ which is nonzero because so is its restriction to $\X\cap \V$ where $\V$ is as in the previous lemma (indeed, above $\X\cap \V$ it is multiplication by the degree of $\psi_\V$ by the 'm\'ethode de la trace' used above). This map comes, after shifting and twisting, from a map $\Q_\ell\to\Q_\ell$ of constant sheaves. Since $\X$ is integral, there is only one such map up to scaling. It follows that $c_1=\psi_\X^*(c)$ maps to a nonzero constant multiple of $c$ by $\rho_\X$. Since we also have $c_1=j_\X^*(c_2)$, by commutativity of the diagram we obtain that $c$ is in the image of the right vertical map
$j_\X^*:\Het^{2p-N}(\Y,\ICY(p)(p))\to \Het^{2p-N}(\X,\Q_{\ell,\X}(p)[N])$, which is what we wanted to prove.\end{proof}

\begin{proof}[Second proof of Proposition \ref{mainprop}] Suppose now $\Y=\X^B$ for a relative compactification $\pi^B:\,\X^B\to B$ of $\pi:\, \X\to U$, so that we have a commutative pullback diagram
$$
\begin{CD} \X @>{j_\X}>> \X^B \\
@V{\pi}VV @VV{\pi^B}V \\
U @>{j}>> B.
\end{CD}
$$
Consider the intersection complex $\ICXB=j_{\X !*}\Q_\ell[d+b]$ on $\X^B$. We have an adjunction map
\begin{equation}\label{globalcuki}
\ICXB(p)\to \R j_{\X*}j_\X^*\ICXB(p)\cong\R j_{\X*}\Q_\ell[d+b](p).
\end{equation}
Applying the functor $\R\pi^B_*$ and using the diagram above we obtain a map
\begin{equation}\label{cuki}
\R\pi^B_*\ICXB(p)\to \R\pi^B_*(\R j_{\X*}\Q_\ell[d+b](p))\cong\R j_*(\R\pi_{*}\Q_\ell[d+b](p)).
\end{equation}
By the decomposition theorem the complex $\R\pi^B_*\ICXB(p)$ in $D^b(B, \Q_\ell)$ is the direct sum of shifts of its perverse cohomology sheaves ${}^p\R^i\pi^B_*\ICXB(p)$. Restricting over $U$ we obtain the decomposition of the complex $\R\pi_*\Q_\ell[d+b](p)$ as the direct sum of shifts of its cohomology sheaves $\R^i\pi^B_*\ICXB(p)$. So the map above decomposes as a direct sum of the maps
$$
{}^p\R^i\pi^B_*\ICXB(p)[-i]\to \R j_*(\R^i\pi_{*}\Q_\ell[d+b](p))[-i].
$$
The map induced by the adjunction map (\ref{globalcuki}) on the $(2p-d-b)$-th cohomology of $\X^B$ is of the form
$$
H^{2p-d-b}(\X^B, \ICXB(p))\to H^{2p}(\X, \Q_\ell(p)).
$$
Using the map (\ref{cuki}) we may identify it with a map
$$
H^{2p-d-b}(B, \R\pi^B_*\ICXB(p))\to H^{2p}(U,\R\pi_{*}\Q_\ell(p))
$$
which thus decomposes as a direct sum of maps
\begin{equation}\label{cukidec}
H^{2p-d-b-i}(B, {}^p\R^i\pi^B_*\ICXB(p))\to H^{2p-i}(U,\R^i\pi_{*}\Q_\ell(p)).
\end{equation}
Now a cycle class in $H^{2p}(\X, \Q_\ell(p))$ comes from a class in $H^{2p-d-b}(\X^B, \ICXB(p))$ by Proposition \ref{ICclass}. If moreover it lies in the component $H^{2p-i}(U,\R^i\pi_{*}\Q_\ell(p))$, it comes from the component $H^{2p-d-b-i}(B, {}^p\R^i\pi^B_*\ICXB(p))$ of $H^{2p-d-b}(\X^B, \ICXB(p))$.

It remains to show that the map (\ref{cukidec}) factors through the map
$$
H^{2p-b-i}(B, j_{!*}\R^i\pi_*\Q_\ell(p)[b])\to H^{2p-i}(U,\R^i\pi_{*}\Q_\ell(p)),
$$
from which the proposition will follow by taking $i=2p-1$. To this end we decompose the perverse sheaf ${}^p\R^i\pi^B_*\ICXB(p)$ further into its simple components and compare it to the decomposition of the $(-d)$-th shift of its restriction $\R^i\pi_{*}\Q_\ell(p)[d+b]$ over $U$. It follows that ${}^p\R^i\pi^B_*\ICXB(p)$ decomposes as a direct sum of $j_{*!}(\R^i\pi_{*}\Q_\ell(p)[b])[d]$ and some simple components supported outside of $U$. These extra components vanish when restricted to $U$ and the claim is established.
\end{proof}

\begin{proof}[Sketch of proof of Theorem \ref{complextheo}]
Assume $k=\C$ and pick a class $\alpha^p\in CH^p_{\rm hom}(X)_\Q$. As before, we can extend it to a cycle class on some model $\X$ over $U\subset B$ suitably small, whence a topological cycle class $\alpha^p_t\in H^{2p}(\X(\C), \Q(p))$. Using the comparison theorem between complex and \'etale cohomology, we view the above group as a subgroup of $\Het^{2p}(\X, \Q_\ell(p))$. Similarly, we may view $H^{0}(U(\C), \R^{2p}\pi_*\Q(p))$ as a subgroup of $\Het^{0}(U, \R^{2p}\pi_*\Q_\ell(p))$, for instance by viewing $H^0$ in both the topological and the \'etale contexts as the subgroup of monodromy invariants. Thus homological triviality of $\alpha^p$ implies (using compatibility of Leray filtrations) that the class $\alpha^p_U\in\Het^{1}(U, \R^{2p-1}\pi_*\Q_\ell(p))$ considered in Proposition \ref{mainprop} comes from a topological class $\alpha^p_{U,t}\in H^{1}(U(\C), \R^{2p-1}\pi_*\Q(p))$, and similarly for $\alpha^q_U\in CH^q_{\rm hom}(X)_\Q$.

Next we consider the analogue of Proposition \ref{mainprop}: the class $\alpha^p_{U,t}$ lies in the image of the natural map $H^{1-b}(B(\C), j_{!*}\R^{2p-1}\pi_*\Q(p)[b])\to H^{1}(U(\C), \R^{2p-1}\pi_*\Q(p))$ and similarly for $\alpha^q_{U,t}$. This follows from the same arguments as in the proof above, using the topological version of the decomposition theorem as stated in \cite{decmi}. Note that the proof becomes simpler, since thanks to resolution of singularities one may take for $\X^B$ a regular compactification of $\X$ over $B$ (which may not, however, be smooth over $B$), and the introduction of $\W$ can be avoided. In particular, the analogue of Proposition \ref{ICclass} becomes obvious.

We construct the topological analogue of the pairing (\ref{cohpairing}) by exactly the same methods as in Section 2, using the intermediate extension functor and the formalism of Poincar\'e duality in the topological context (see e.g. \cite{iversen} for the latter). Finally, we need an analogue of the key injectivity statement of Proposition \ref{keylem} for topological sheaves of $\Q$-modules. It can be established in the same way as in the $\ell$-adic case. More quickly, it can be deduced from Proposition \ref{keylem} by base changing coefficients to $\Q_\ell$ (an injective operation, as above) and applying the comparison theorem.
\end{proof}

\noindent{\bf Challenge:} Show that for $B$ projective the pairing of Theorem \ref{complextheo} has values in the subgroup $H^{1,1}(B, \Q(1))$. The Lefschetz $(1,1)$ theorem would then imply a weaker form of Conjecture \ref{mainconj} over $k=\C$, namely that our cohomological pairing comes from a pairing $CH^p_{\rm hom}(X)_\Q\otimes CH^q_{\rm hom}(X)_\Q\to NS(B)_\Q$. We hope to return to this in a later version.

\section{The case of good reduction everywhere}\label{secgoodred}

In this section we present a simple geometric construction for the height pairing (and hence a solution of Conjecture \ref{mainconj}) in the case where the $K$-variety $X$ admits a smooth projective model $\X\to B$.

On the regular $k$-scheme $\X$ we have the  pairing
\begin{equation}\label{ip}
CH^p(\X)\times CH^q(\X)\to \pic(B),\,\, (z, z')\mapsto \langle z, z'\rangle
\end{equation}
obtained by composing the intersection pairing for $p+q=d+1$
$$
CH^p(\X)\times CH^q(\X)\to CH^{d+1}(\X), \,\, (z, z')\mapsto z\cdot z'
$$
with the pushforward map
$$
\pi_*:\, CH^{d+1}(\X)\to CH^1(B).
$$

The proof of the following proposition was inspired by the proof of a related statement in a preliminary version of \cite{kahn}.

\begin{prop}\label{homeq} Let $g:\, X\to\X$ be the natural map.
If $z\in CH^p(\X)$ satisfies $g^*z\in CH^p_{\rm hom}(X)$ and $z'\in CH^q(\X)$ satisfies $g^*z'=0$, then $\langle z, z'\rangle=0$.
\end{prop}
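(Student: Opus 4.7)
The plan is to combine localisation on $B$, the projection formula, and smooth proper base change in order to reduce the vanishing to a degree computation on generic fibres of $\pi$, where homological triviality of $g^*z$ provides the required vanishing. Since $g^*z'=0$ in $CH^q(X)_\Q$, there is a dense open $V\subset B$ such that $z'|_{\pi^{-1}(V)}=0$. Setting $D:=B\setminus V$ and letting $\iota:\X_D\hookrightarrow\X$ denote the closed inclusion of the preimage, the localisation sequence for Chow groups gives $z'=\iota_*w'$ for some $w'\in CH_{d+b-q}(\X_D)_\Q$. Since $\X$ is regular, intersection theory and the projection formula yield
$$
\pi_*(z\cdot z')=\iota_{B*}\pi_{D*}(y),\qquad y:=\iota^!z\cdot w'\in CH_{b-1}(\X_D)_\Q,
$$
where $\iota^!$ is Fulton's refined Gysin pullback and $\pi_D:\X_D\to D$ is the restriction of $\pi$.

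Decompose $D=\bigcup D_i$ into irreducible components. For $D_i$ of codimension $\geq 2$ in $B$, the group $CH_{b-1}(D_i)_\Q$ vanishes for dimension reasons, so only the divisorial components can contribute to $\pi_{D*}y\in CH_{b-1}(D)_\Q$. For a divisorial component $D_i$ with generic point $\eta_i$, the coefficient of $[D_i]$ in $\pi_{D*}y$ equals the degree over $K(D_i)$ of the zero-cycle $y|_{\X_{\eta_i}}$ on the smooth projective $K(D_i)$-variety $\X_{\eta_i}$, the fibre of $\pi$ over $\eta_i$. On the other hand, smoothness and properness of $\pi$ over all of $B$ guarantee that $\R^{2p}\pi_*\Q_\ell(p)$ is a lisse $\ell$-adic sheaf on $B$; the cycle class of $z$ defines a global section of this sheaf that vanishes at the generic point of $B$ by hypothesis, and hence vanishes at every point by local constancy. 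In particular, $z|_{\X_{\eta_i}}$ is homologically trivial, so the cycle class of $y|_{\X_{\eta_i}}=(\iota^!z)|_{\X_{\eta_i}}\cdot w'|_{\X_{\eta_i}}$ is a cup product one of whose factors is zero, and via the trace isomorphism the degree of $y|_{\X_{\eta_i}}$ vanishes. Consequently $\pi_{D*}y=0$ and $\langle z,z'\rangle=0$.

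The main technical obstacle is the projection formula used in the first step, which in Fulton's framework requires either a regular embedding $\iota$ or a careful use of the refined Gysin construction using only the regularity of $\X$; to keep the argument clean, one may prefer first to modify $B$ by blow-ups so that $D$ becomes a simple normal crossings divisor and then compute component by component, which is the style of cycle-theoretic manipulation the authors credit to the preliminary version of Kahn's paper \cite{kahn}. The spreading of homological triviality in the second step is standard once one knows $\R^{2p}\pi_*\Q_\ell(p)$ is lisse on all of $B$, which is a direct consequence of the good reduction hypothesis that $\X\to B$ extends to a smooth proper morphism.
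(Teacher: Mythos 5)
Your proposal follows the same overall strategy as the paper: use $g^*z'=0$ to support $z'$ over a closed subset of $B$, apply the projection formula, reduce the degree computation to fibres of $\pi$, and conclude by propagating the homological triviality of $g^*z$ to every fibre via smooth proper base change. The one place where you diverge is in handling the singularity of the complement $D=B\setminus V$. The paper avoids refined Gysin classes altogether by first deleting the singular locus $Z_{\rm sing}\subset Z$ from $B$: since $Z_{\rm sing}$ has codimension $\geq 2$ in $B$, this does not change $\pic(B)$, and afterwards $Z$ and $\X_Z$ are smooth so the ordinary Gysin pullback and Fulton's base change compatibility (6.2(a)) apply directly, after which the vanishing is tested at \emph{every closed point} of $Z$ by identifying $CH^0(Z)$ with a direct sum of copies of $\Z$. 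Your primary route --- using Fulton's bivariant intersection product with supports, available because $\X$ is smooth over $k$ --- is also legitimate, and your reduction to generic points of the divisorial components of $D$ (after observing that $CH_{b-1}$ of the lower-dimensional components vanishes) works as well; these are honest alternatives, not gaps.

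However, your alternative suggestion at the end --- blowing up $B$ until $D$ has simple normal crossings --- is not innocuous and would create real extra work. Blowing up a centre of codimension $\geq 2$ enlarges $\pic$ by the exceptional classes, so the pairing computed on the blow-up lands in $\pic(\tilde B)_\Q$ rather than $\pic(B)_\Q$, and one would then have to show that the result lies in (or projects correctly to) $\pic(B)_\Q$, and also that replacing $\X$ by $\X\times_B\tilde B$ is compatible with the original pairing. By contrast, passing to $B\setminus Z_{\rm sing}$ is a Zariski-open localisation that leaves $\pic(B)$ untouched, and this is the move that makes the argument clean. I would also make explicit the base-change compatibility of the Gysin pullback that justifies the identification $(\iota^!z)|_{\X_{\eta_i}}=z|_{\X_{\eta_i}}$, since this is the precise point where Fulton, Theorem 6.2(a) (which the paper invokes at the level of closed points) is used.
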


\begin{dem}
If $g^*z'=0$, there is some open subscheme $U\subset B$ such that $z'$ restricts to 0 in $CH^q(\X_U)$, where $\X_U:=\X\times_BU$. Denoting by $Z$ the complement of $U$ and setting $
\X_Z:=\X\times_BZ$, the localization sequence for Chow groups (\cite{fulton}, Proposition 1.8) implies that $z'$ comes from a class $z''$ in $CH^q(\X_Z)$. Let $Z_{\rm sing}\subset Z$ be the singular locus of $Z$. It is a closed subscheme of codimension $\geq 2$ in $B$, so $\pic(B)\stackrel\sim\to \pic(B\setminus Z_{\rm sing})$. Therefore we may replace $B$ by $B\setminus Z_{\rm sing}$ and $\X$ by $\X\times_B(B\setminus Z_{\rm sing})$ and assume $Z$, and hence $\X_Z$, are smooth. Let $\pi_Z:\X_Z\to Z$ be the morphism obtained from $\pi$ by base change, and let $\iota:\X_Z\to Z$ (resp. $\rho: Z\to B$) be the inclusion morphisms. We thus have a pullback diagram of $k$-varieties
$$
\begin{CD}
\X_Z @>{\iota}>> \X \\
@V{\pi_Z}VV @VV{\pi}V \\
Z @>{\rho}>> B
\end{CD}
$$
with $X_Z$ and $Z$ smooth but possibly disconnected.

We compute
$$
\langle z,z'\rangle=\pi_*(z\cdot z')=\pi_*(z\cdot\iota_*(z''))=
\pi_*(\iota_*(\iota^*(z)\cdot z''))=\rho_*(\pi_{Z*}(\iota^*(z\cdot z'')))
$$
where we used the projection formula (see \cite{fulton}, 8.1.1 (c)) in the third equality.

It suffices to show that the class $\pi_{Z*}(\iota^*(z)\cdot z'')\in CH^0(Z)$ vanishes. Since $CH^0(Z)$ is a finite direct sum of copies of $\Z$ indexed by the components of $Z$, it will be enough to show that
\mbox{$x^*(\pi_{Z*}(z_0'\cdot\iota^*(z)))$} vanishes for an arbitrary closed
point $x:{\rm Spec}\,k\to Z$. Denote by $\pi_x:\X_x\to \spec k$ the fibre of
$\pi$ over $x$ and let $\iota_x:X_x\to X_Z$ be the inclusion map, so that we have another pullback diagram
$$
\begin{CD}
\X_x @>{\iota_x}>> \X_Z \\
@V{\pi_x}VV @VV{\pi_Z}V \\
\spec k @>{x}>> Z
\end{CD}
$$
 Note that by (\cite{fulton}, Theorem 6.2 (a))  we have the base change compatibility $\pi_{x*}\circ\iota_x^*=x^*\circ\pi_{Z*}$. We can thus compute
$$
x^*(\pi_{Z*}(\iota^*(z)\cdot z''))=\pi_{x*}(\iota_x^*(\iota^*(z)\cdot z''))=\pi_{x*}(\iota_x^*(\iota^*(z))\cdot\iota_x^*(z'')).
$$
To show that the class on the right hand side is zero, it suffices to verify that its cycle class in $\Het^0(k(x),\Q_\ell(0))$ is 0. Now since $g^*z$ is homologically equivalent to zero, the restriction of $z$ to all fibres of $\pi$ is homologically equivalent to 0 by the smooth proper base change theorem. In particular, the cycle class of  $\iota_x^*(\iota^*(z))$ in $\Het^{2p}(\X_x,\Q_\ell(p))$ is trivial. The claim then follows from the compatibility of the cycle class map with push-forwards and products.
\end{dem}

\begin{cor}\label{goodredproduct}
The pairing (\ref{ip}) induces a pairing
$$
CH^p_{\rm hom}(X)\times CH^q_{\rm hom}(X)\to \pic(B)
$$
on generic fibres, still under the assumptions that $p+q=d+1$ and $X$ admits a smooth projective model $\X\to B$.
\end{cor}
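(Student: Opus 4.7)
The plan is to define the induced pairing by the formula $\langle \alpha, \beta\rangle := \langle z, z'\rangle$ where $z\in CH^p(\X)$ and $z'\in CH^q(\X)$ are arbitrary lifts of $\alpha \in CH^p_{\rm hom}(X)$ and $\beta \in CH^q_{\rm hom}(X)$ along the restriction map $g^*:CH^*(\X)\to CH^*(X)$, and then to check that this is independent of the choice of lifts.

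First I would establish that lifts exist, i.e. that the restriction map $g^*:CH^p(\X)\to CH^p(X)$ is surjective for all $p$. Since $X$ is the generic fibre of $\pi:\X\to B$, any codimension $p$ cycle on $X$ is the restriction of its Zariski closure in $\X$, which is a codimension $p$ cycle on $\X$; equivalently, one invokes the localization sequence (\cite{fulton}, Proposition 1.8) together with the identification $CH^p(X)=\varinjlim_U CH^p(\X_U)$ over open $U\subset B$ and the fact that the restriction $CH^p(\X)\to CH^p(\X_U)$ is surjective by localization.

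Next I would verify independence of the lifts. Suppose $z_1,z_2\in CH^p(\X)$ both restrict to $\alpha$ and $z_1',z_2'\in CH^q(\X)$ both restrict to $\beta$. Writing
$$
\langle z_1,z_1'\rangle-\langle z_2,z_2'\rangle=\langle z_1-z_2,\,z_1'\rangle+\langle z_2,\,z_1'-z_2'\rangle,
$$
the second term vanishes by Proposition \ref{homeq} applied directly, since $g^*z_2=\alpha\in CH^p_{\rm hom}(X)$ and $g^*(z_1'-z_2')=0$. For the first term I would use the commutativity of the intersection product on the smooth $k$-scheme $\X$, which implies that the pairing (\ref{ip}) is symmetric (after exchanging the roles of $p$ and $q$, which are interchangeable since $p+q=d+1$); then Proposition \ref{homeq} applies again with $z_1'$ in the role of $z$ and $z_1-z_2$ in the role of $z'$, since $g^*z_1'=\beta\in CH^q_{\rm hom}(X)$ and $g^*(z_1-z_2)=0$, yielding $\langle z_1-z_2,z_1'\rangle=\langle z_1',z_1-z_2\rangle=0$. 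Bilinearity of the induced pairing is inherited from bilinearity of (\ref{ip}).

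The only mild obstacle in this argument is the bookkeeping for symmetry, since Proposition \ref{homeq} as stated treats its two arguments asymmetrically; but this is resolved by the commutativity of the intersection product on the regular $k$-scheme $\X$, so no new idea is required beyond Proposition \ref{homeq} itself.
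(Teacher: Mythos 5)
Your proposal is correct and takes essentially the same route as the paper: lift $\alpha$ and $\beta$ to $\X$ (e.g.\ via Zariski closure), apply Proposition~\ref{homeq} to see that $\langle z,z'\rangle$ is unchanged when $z'$ is varied among lifts of $\beta$, and then invoke the symmetry of the intersection product on the regular scheme $\X$ to handle the variation in $z$. Your telescoping identity just makes explicit the bilinearity step that the paper compresses into ``by symmetry the same is true of $\alpha^p$''.
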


\begin{dem}
Pick $\alpha^p\in CH^p_{\rm hom}(X)$ and $\alpha^q\in CH^q_{\rm hom}(X)$. Extend $\alpha^p$ to a cycle class on $\X$, for instance by taking the Zariski closure $z$ of a representative. Suppose that $z_1'$ and $z_2'$ are two cycles on $\X$ whose classes both restrict to $\alpha^q$ on $X$. Then $w:=z_1'-z_2'$ satisfies $g^*w=0$, and hence by the proposition we have $\langle z, w\rangle=0$. This shows that $\langle z, z_1'\rangle$ only depends on $\alpha^q$ and by symmetry the same is true of $\alpha^p$.
\end{dem}

\begin{rema}\rm
Inspection of the above arguments shows that the pairing of the corollary exists over an arbitrary perfect field $k$ (the only difference is that in the proof of Proposition \ref{homeq} the point $x$ should be a geometric point).
\end{rema}

It remains to check that the pairing in the above corollary is compatible with that of Theorem \ref{mainthm}. To do so, take representatives of $\alpha^p\in CH^p_{\rm hom}(X)$ and $\alpha^q\in CH^q_{\rm hom}(X)$ as above, extend them to cycles on the whole of $\X$ and consider the associated classes ${\alpha^p_B\in\Het^{1}(B,\R^{2p-1}\pi_*\Q_\ell(p))}$ and $\alpha^q_B\in\Het^{1}(B,\R^{2q-1}\pi_*\Q_\ell(q))$ as constructed before Proposition \ref{mainprop}. Since in this case $U=B$, all intermediate extensions are just identity maps and the pairing (\ref{cohpairing}) of Section 2 becomes the cup-product pairing
\begin{equation}\label{cohp2}
H^{1}(B, \R^{2p-1}\pi_*\Q_\ell(p))\times H^{1}(B, \R^{2q-1}\pi_*\Q_\ell(q))\to H^{2}(B, \Q_\ell(d+1))
\end{equation}
induced by fibrewise Poincar\'e duality. Thus the compatibility to be verified becomes:

\begin{prop}
The cycle class of the value of the pairing of Corollary \ref{goodredproduct} on the pair  $(\alpha^p,\alpha^q)$ equals the value of the pairing (\ref{cohp2}) on the pair $(\alpha^p_B,\alpha^q_B)$.
\end{prop}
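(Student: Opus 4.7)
The plan is to trace the cup-product through the Leray spectral sequence of $\pi:\X\to B$ and match it with the geometric pairing via the standard compatibilities of the $\ell$-adic cycle class. Concretely, I would proceed in three steps.

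First, I use two classical properties of the cycle class map: its compatibility with intersection products (so that $\text{cl}(z\cdot z')=\text{cl}(z)\cup\text{cl}(z')$ in $\Het^{2d+2}(\X,\Q_\ell(d+1))$) and with proper pushforward (so that $\text{cl}(\pi_*(z\cdot z'))=\pi_*\text{cl}(z\cdot z')$ in $\Het^{2}(B,\Q_\ell(1))$). Both are found in \cite{sga5}, Expos\'e VII, or \cite{fu}, Chapter 10. Putting them together reduces the claim to computing the image of $\text{cl}(z)\cup\text{cl}(z')$ under the cohomological pushforward $\pi_*:\Het^{2d+2}(\X,\Q_\ell(d+1))\to\Het^2(B,\Q_\ell(1))$.

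Second, I compute this image through the Leray spectral sequence and its multiplicative structure. Using the truncation triangles of $\R\pi_*\Q_\ell$ recalled before Proposition \ref{mainprop}, the homological triviality of $g^*z$ forces $\text{cl}(z)$ to lie in $F^1\Het^{2p}(\X,\Q_\ell(p))$ with image in $F^1/F^2\cong\Het^1(B,\R^{2p-1}\pi_*\Q_\ell(p))$ equal to $\alpha^p_B$ (and symmetrically for $z'$). The compatibility of cup-products with the truncation filtration on $\R\pi_*\Q_\ell$ (which comes from the natural $\R\pi_*\Q_\ell(p)\otimes^{\bf L}\R\pi_*\Q_\ell(q)\to\R\pi_*\Q_\ell(d+1)$ induced by the diagonal) then places $\text{cl}(z)\cup\text{cl}(z')$ in $F^2\Het^{2d+2}(\X,\Q_\ell(d+1))$ with graded image in $\Het^2(B,\R^{2d}\pi_*\Q_\ell(d+1))$ equal to $\alpha^p_B\cup\alpha^q_B$ computed via the sheaf pairing $\R^{2p-1}\pi_*\Q_\ell(p)\otimes\R^{2q-1}\pi_*\Q_\ell(q)\to \R^{2d}\pi_*\Q_\ell(d+1)$.

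Third, I identify the pushforward $\pi_*$ with the trace map on the bottom Leray piece. Since $\pi$ is smooth projective of relative dimension $d$, Poincar\'e duality for $\pi$ gives an isomorphism $\R^{2d}\pi_*\Q_\ell(d)\stackrel\sim\to\Q_\ell$, and the cohomological proper pushforward $\pi_*$ coincides with the composition of the projection $\Het^{2d+2}(\X,\Q_\ell(d+1))\twoheadrightarrow \Het^2(B,\R^{2d}\pi_*\Q_\ell(d+1))$ with $\Het^2(B,-)$ applied to this trace. Substituting yields $\pi_*(\text{cl}(z)\cup\text{cl}(z'))=\alpha^p_B\cup\alpha^q_B$ in $\Het^2(B,\Q_\ell(1))$, which is by construction the value of (\ref{cohp2}) on $(\alpha^p_B,\alpha^q_B)$.

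The principal technical obstacle I expect is the final identification: one must check that the sheaf-level pairing $\R^{2p-1}\pi_*\Q_\ell(p)\otimes \R^{2q-1}\pi_*\Q_\ell(q)\to \R^{2d}\pi_*\Q_\ell(d+1)\cong\Q_\ell(1)$ coming from the multiplicative structure on $\R\pi_*\Q_\ell$ composed with the trace agrees, on the nose, with the fibrewise Poincar\'e duality pairing used in Step 2 of Section \ref{seccons} to build (\ref{cohpairing}). This is essentially a matter of unwinding definitions—both are manifestations of relative Poincar\'e duality for $\pi$—but care is required because the construction in Section \ref{seccons} passes through the dualizing complex and the equivalence $\R\mathcal{H}om(\R^{2p-1}\pi_*\Q_\ell,\Q_\ell)\cong \R^{2q-1}\pi_*\Q_\ell(d)$, while the Leray cup-product uses the diagonal. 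The agreement ultimately follows from the naturality of Grothendieck--Verdier duality with respect to the diagonal morphism and the fact that in the smooth proper case the relative dualizing complex is $\Q_\ell(d)[2d]$.
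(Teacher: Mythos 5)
Your proposal is correct and takes essentially the same route as the paper's own sketch: reduce via the compatibility of the cycle class map with intersection products and proper pushforward, then identify the pushforward of ${\rm cl}(z)\cup{\rm cl}(z')$ with the value of (\ref{cohp2}) on $(\alpha^p_B,\alpha^q_B)$ using the multiplicative structure on $\R\pi_*\Q_\ell$, its truncations, and the trace map. The only cosmetic differences are that the paper phrases the filtration argument directly with the truncation functors $\tau_{\leq}$ and distinguished triangles rather than with the Leray spectral sequence, and treats your final compatibility (sheaf-level pairing plus trace versus fibrewise Poincar\'e duality) as essentially definitional rather than as a point needing verification.
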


\begin{dem} This is well known but we sketch an argument for the sake of completeness. To construct the pairing of Corollary \ref{goodredproduct} we extended $\alpha^p$ and $\alpha^q$ to $\X$ and took the intersection product (\ref{ip}) followed by the pushforward map to $\pic(B)$. The cohomological realization of this construction is the cup-product pairing
\begin{equation}\label{cupp}
H^{2p}(\X, \Q_\ell(p))\times H^{2q}(\X, \Q_\ell(q))\to H^{2d+2}(\X, \Q_\ell(d+1))
\end{equation}
followed by the pushforward map
\begin{equation}\label{push}
H^{2d+2}(\X, \Q_\ell(d+1))\to H^2(B, \Q_\ell(1))
\end{equation}
induced by $\pi$ that can be described in more detail as follows. We have an isomorphism
$$
H^{2d+2}(\X, \Q_\ell(d+1))\cong H^{2d+2}(B, \R\pi_*\Q_\ell(d+1))
$$
coming from the isomorphism of functors $\R\Gamma(\X,\_\_)=\R\Gamma(B,\_\_)\circ \R\pi_*$.

Since $\pi$ has relative dimension $d$, $\R\pi_*$ has trivial cohomology in degrees $>2d$, so that
$$
H^{2d+2}(B, \R\pi_*\Q_\ell(d+1))\cong H^{2d+2}(B, \tau_{\leq 2d}\R\pi_*\Q_\ell(d+1))
$$
where $\tau_\leq$ is the sophisticated truncation functor. The morphism
$$
\tau_{\leq 2d}\R\pi_*\Q_\ell(d+1)\to \R^{2d}
 \pi_*\Q_\ell(d+1)[-2d]
$$
gives a map
$$
H^{2d+2}(B,\tau_{\leq 2d}\R\pi_*\Q_\ell(d+1))\to H^2(B,\R^{2d}
 \pi_*\Q_\ell(d+1)).
$$
Finally, the trace map $\R^{2d}
 \pi_*\Q_\ell(d+1)\to \Q_\ell(1)$ of Poincar\'e duality induces a map
$$
H^2(B,\R^{2d}
 \pi_*\Q_\ell(d+1))\to H^2(B, \Q_\ell(1))
$$
and (\ref{push}) is the composition of these.

The tensor product pairing
$$
\Q_\ell(p)\otimes \Q_\ell(q)\to \Q_\ell(d+1)
$$
induces a derived pairing
\begin{equation}\label{dp}
\R\pi_*\Q_\ell(p)\otimes^{\bf L} \R\pi_*\Q_\ell(q)\to \R\pi_*\Q_\ell(d+1).
\end{equation}

Similarly, there is a derived pairing
\begin{equation}\label{dpgamma}
\R\Gamma(\X,\Q_\ell(p))\otimes^{\bf L} \R\Gamma(\X,\Q_\ell(p))\to \R\Gamma(\X,\Q_\ell(p+q))
\end{equation}
which induces (\ref{cupp}) by passing to cohomology groups. It is a formal exercise using Godement resolutions to verify that
the pairing (\ref{dpgamma}) coincides with the pairing
\begin{equation}\label{dpcomp}
\R\Gamma(B,\R\pi_*\Q_\ell(p))\otimes^{\bf L} \R\Gamma(B,\R\pi_*\Q_\ell(p))\to \R\Gamma(B,\R\pi_*\Q_\ell(p+q))
\end{equation}
induced by (\ref{dp}) via the composition $\R\Gamma(\X,\_\_)=\R\Gamma(B,\_\_)\circ \R\pi_*$.

To obtain the pairing (\ref{cohp2}) from (\ref{cupp}), we note first that (\ref{dp}) induces truncated pairings
$$\tau_{\leq i}\R\pi_*\Q_\ell(p)\otimes^{\bf L} \tau_{\leq j}\R\pi_*\Q_\ell(q)\to \tau_{\leq i+j}\R\pi_*\Q_\ell(d+1)$$
for all $i,j$. In particular, for $p+q=d+1$ we see that (\ref{dpcomp}) induces pairings
$$
H^{2p}(B,\tau_{\leq 2p-1}\R\pi_*\Q_\ell(p))\otimes^{\bf L} H^{2q}(B,\tau_{\leq 2q-1}\R\pi_*\Q_\ell(p))\to H^{2d+2}(B,\tau_{\leq 2d}\R\pi_*\Q_\ell(p+q))
$$
where we have seen that the last group maps to $H^2(B, \Q_\ell(1))$ via the trace map.

The commutative diagram
$$
\begin{CD}
\tau_{\leq 2p-2}\R\pi_*\Q_\ell(p)\otimes^{\bf L} \tau_{\leq 2q-1}\R\pi_*\Q_\ell(q) @>>> \tau_{\leq 2p-1}\R\pi_*\Q_\ell(p)\otimes^{\bf L} \tau_{\leq 2q-1}\R\pi_*\Q_\ell(q) \\
@VVV @VVV \\
\tau_{\leq 2d-1}\R\pi_*\Q_\ell(d+1) @>>> \tau_{\leq 2d}\R\pi_*\Q_\ell(d+1)
\end{CD}
$$
together with the distinguished triangles
$$
\tau_{\leq 2p-2}\R\pi_*\Q_\ell(p)\to \tau_{\leq 2p-1}\R\pi_*\Q_\ell(p)\to \R^{2p-1}\pi_*\Q_\ell(p)[-2p+1]\to \tau_{\leq 2p-2}\R\pi_*\Q_\ell(p)[1]
$$
and
$$
\tau_{\leq 2d-1}\R\pi_*\Q_\ell(d+1)\to \tau_{\leq 2d}\R\pi_*\Q_\ell(d+1)\to \R^{2d}\pi_*\Q_\ell(d+1)[-2d]\to \tau_{\leq 2d-1}\R\pi_*\Q_\ell(d+1)[1]
$$
shows that tensoring the first triangle by $\tau_{\leq 2q-1}\R\pi_*\Q_\ell(q)$ yields an induced pairing
$$
\R^{2p-1}\pi_*\Q_\ell(p)[-2p+1]\otimes^{\bf L} \tau_{\leq 2q-1}\R\pi_*\Q_\ell(q)\to \R^{2d}\pi_*\Q_\ell(d+1)[-2d].
$$
Since the target here is concentrated in degree $2d$, we see using the distinguished triangle
$$
\tau_{\leq 2q-2}\R\pi_*\Q_\ell(q)\to \tau_{\leq 2q-1}\R\pi_*\Q_\ell(q)\to \R^{2q-1}\pi_*\Q_\ell(q)[-2q+1]\to \tau_{\leq 2q-2}\R\pi_*\Q_\ell(q)[1]
$$
that the above pairing factors through a pairing
$$
\R^{2p-1}\pi_*\Q_\ell(p)[-2p+1]\otimes^{\bf L} \R^{2q-1}\pi_*\Q_\ell(q)[-2q+1]\to \R^{2d}\pi_*\Q_\ell(d+1)[-2d].
$$
Application of the functor $\R\Gamma(B,\_\_)$ finally induces
the pairing (\ref{cohp2}).

To sum up, the pairing (\ref{cohp2}) arises from the composition of (\ref{cupp}) and (\ref{push}) via a truncation procedure. It remains to note that, as explained before Proposition \ref{mainprop}, the cycle class of an extension of $\alpha^p$ (resp. $\alpha^q$) to $\X$ is sent exactly to $\alpha^p_B$ (resp. $\alpha^q_B$) under this procedure.
\end{dem}

\section{A conjecture about Arakelov Chow groups}\label{secarak}

In this section we formulate an Arakelovian analogue of Conjecture \ref{mainconj}.

Let $R$ be a regular arithmetic ring, i.e. a regular,
excellent, Noetherian domain, together
with a finite set $\mathcal S$ of injective ring homomorphisms $R\ra{\bf C}$
which is invariant under complex conjugation.
Consider a regular, integral scheme $B_0$ flat and of  finite type over  $R$.
We shall write
 $B_0({\bf C})$ for the set of complex points of the (non-connected) complex variety given by the disjoint union of the
$B_0\times_{R,\sigma}{\bf C}$ for all $\sigma\in\mathcal S$. It
naturally carries the structure of a complex manifold.

A hermitian line bundle on $B_0$ is by definition a line bundle $L$ together with a hermitian metric on the holomorphic line bundle associated with $L$ over $B_0({\bf C})$ which is invariant under complex conjugation.
We shall write $\widehat{\rm Pic}(B_0)$ for the group of isomorphism classes of hermitian line bundles on $B_0$, together with the group structure given by the tensor product (see \cite{gsar} for background and details). By construction, there is  a homomorphism
$\widehat{\rm Pic}(B_0)\to{\rm Pic}(B_0)$ which forgets the hermitian structure.

Fix an algebraic closure $k$ of the fraction field $k_0$ of $R$ and denote by $B$ the base change of $B_0$ to $k$.
We shall write $\phi:\widehat{\rm Pic}(B_0)\to
{\rm Pic}(B)$ for the composition of the above forgetful homomorphism with the natural pullback map
${\rm Pic}(B_0)\to {\rm Pic}(B)$. When $R$ is the ring of integers of a number field, $\mathcal S$ the set of all complex embeddings of $R$ and $B_0={\rm Spec}\,R$, there is a group homomorphism ${\rm deg}:\widehat{\rm Pic}({\rm Spec}(R)))\to{\bf R}$ called the arithmetic degree (see \cite{bogs}, 2.1.3). More generally, if $R$ and $\mathcal S$ are as before and $B_0$ is projective over $R$, then an ample hermitian line bundle $L$ on $B_0$ induces an arithmetic degree (or height) map $\widehat{\rm Pic}(B_0)\to{\bf R}$ by intersecting with the $(b-1)$-st power of the arithmetic first Chern class of $L$ and then applying the above degree map on $\widehat{\rm Pic}({\rm Spec}(R))$ (\cite{bogs}, 3.1.1).

Let $X_0$ be a smooth projective integral variety of dimension $d$ over the function field $K_0$ of $B_0$.

\begin{conj} Suppose that $p+q=d+1$.
\label{conjAr} There exists a pairing
$$
h:{\rm CH}^p_{\rm hom}({X_0})_\Q\otimes {\rm CH}^q_{\rm hom}({X_0})_\Q\to \widehat{\rm Pic}({B_0})_\Q
$$
with the following properties.

\begin{enumerate}
\item In the case when $R$ is the ring of integers of a number field, $\mathcal S$ the set of all complex embeddings of $R$ and $B_0={\rm Spec}\,R$, the pairing $\widehat{\rm deg}\circ h$ is the height pairing whose existence was conjectured by Beilinson in \cite{beilinsonhp}.

\item In the case when $B_{k_0}$ is geometrically integral denote by $X$ the base change of $X_0$ to $K:=kK_0$. Then the pairing $\phi\circ h$ is the composition of the pairing of Conjecture \ref{mainconj} with the base change map
$$
{\rm CH}^p_{\rm hom}({X_0})_\Q\otimes {\rm CH}^q_{\rm hom}({X_0})_\Q\to
{\rm CH}^p_{\rm hom}(X)_\Q\otimes {\rm CH}^q_{\rm hom}(X)_\Q.
$$
If moreover $B_0$ is projective, we expect $h$ to become a non-degenerate $\R$-valued pairing after base change to $\R$ and composition with the arithmetic degree map $\widehat{\rm Pic}({B_0})_\Q\to \R$ associated with an ample hermitian line bundle on $B_0$.
\end{enumerate}
\end{conj}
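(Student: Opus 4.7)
A natural strategy is to mirror the good-reduction construction of Section \ref{secgoodred} within Gillet--Soul\'e arithmetic intersection theory. Assume first that $X_0$ extends to a smooth projective model $\pi_0:\mathcal X_0\to B_0$. Then one disposes of arithmetic Chow groups $\widehat{\rm CH}^{*}(\mathcal X_0)_\Q$ with an intersection product and an arithmetic pushforward $\pi_{0*}:\widehat{\rm CH}^{b+1}(\mathcal X_0)_\Q\to\widehat{\rm Pic}(B_0)_\Q$. The plan is to lift $\alpha^p\in CH^p_{\rm hom}(X_0)_\Q$ and $\alpha^q\in CH^q_{\rm hom}(X_0)_\Q$ to arithmetic classes $\widehat z^{\,p}=(Z^p,g^p)$ and $\widehat z^{\,q}=(Z^q,g^q)$, where $Z^p,Z^q$ are Zariski closures of representative cycles on $\mathcal X_0$ and $g^p,g^q$ are Green currents on $\mathcal X_0(\C)$ chosen harmonic on each archimedean fibre of $\pi_0$ (possible by Bismut--Gillet--Soul\'e precisely because the restrictions to the geometric generic fibres are homologically trivial). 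One then sets $h(\alpha^p,\alpha^q):=\pi_{0*}(\widehat z^{\,p}\cdot\widehat z^{\,q})\in\widehat{\rm Pic}(B_0)_\Q$.

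\textbf{Verification of the properties.} The first task is well-definedness, which requires an Arakelov analogue of Proposition \ref{homeq}: if $g^{*}w_K=0$ for some $w\in CH^q(\mathcal X_0)_\Q$ and the Green current of an arithmetic lift $\widehat w$ is chosen compatibly with this vanishing, then $\pi_{0*}(\widehat z^{\,p}\cdot\widehat w)$ vanishes. I expect the localization argument of Proposition \ref{homeq} to adapt, with the archimedean contribution controlled by the harmonicity of the chosen Green currents and by compatibility of the arithmetic pushforward with its geometric analogue. Property (2) is then essentially immediate in the good-reduction case: $\phi\circ h$ forgets Green currents and pulls back to $B$, recovering the pairing of Corollary \ref{goodredproduct}, which, conditional on Conjecture \ref{mainconj}, yields the required compatibility. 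Property (1) should follow by unravelling Beilinson's definition in \cite{beilinsonhp}, Section 4: when $B_0={\rm Spec}\,R$, his height pairing is by construction the arithmetic degree of precisely this Gillet--Soul\'e pushforward.

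\textbf{Main obstacles.} The principal difficulty is the bad-reduction case. When $X_0$ admits no smooth projective model over $B_0$, naive Zariski closures are no longer the right objects and the intersection-theoretic formalism above breaks down. One possible approach is to follow the philosophy of Section \ref{secdec}: pass to a regular alteration of a compactification as in Lemma \ref{lemic}, perform the arithmetic intersection there, and descend via the trace method. An alternative is to use the Burgos--Kramer--K\"uhn generalized arithmetic Chow groups, with Green currents furnished by a Hodge-theoretic incarnation of the $\ell$-adic perverse data used to construct the pairing (\ref{cohpairing}) in Section \ref{seccons}. In either strategy, verifying that $\phi\circ h$ reproduces the pairing of Conjecture \ref{mainconj} in full generality is the crucial delicate point, and naturally presupposes the truth of Conjecture \ref{mainconj} itself. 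The non-degeneracy expectation is of a different order altogether: already in the classical curve case it reduces to an Arakelov Hodge-index statement (Faltings--Hriljac), and in higher dimensions it seems out of reach with current methods, although for $X_0$ an abelian variety the Moret-Bailly construction of \cite{moret} may provide a viable starting point in the $p=1$ case.
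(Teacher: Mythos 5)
The statement you are addressing is Conjecture \ref{conjAr}, which the paper does not prove: it is stated as an open conjecture, the only evidence offered being the good-reduction construction of Section \ref{secgoodred} for the geometric analogue and Moret-Bailly's candidate pairing (\cite{moret}, III 4.4.1) for abelian varieties with $q=1$. There is therefore no proof in the paper to compare yours with, and your text, as you yourself acknowledge, is a strategy outline rather than a proof.

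Within that caveat, your good-reduction plan is the natural one and runs parallel to Section \ref{secgoodred}: Zariski closures in a smooth projective model $\mathcal X_0\to B_0$, Gillet--Soul\'e arithmetic intersection, pushforward to $\widehat{\rm Pic}(B_0)_\Q$; this is also how Beilinson's archimedean height is expected to be realized, so property (1) would plausibly reduce to unwinding definitions. But there are genuine gaps even in this restricted setting. The crucial one is well-definedness: you assert an Arakelov analogue of Proposition \ref{homeq} but do not prove it. The geometric localization argument of that proposition only controls the finite places; at the archimedean places the value of $\pi_{0*}(\widehat z^{\,p}\cdot\widehat w)$ depends on the chosen Green currents, and harmonicity alone does not make the purely archimedean contribution vanish --- one must show that changing representatives and Green currents moves the result only within the relations defining $\widehat{\rm Pic}(B_0)_\Q$, which is precisely the delicate point in the Beilinson--Gillet--Soul\'e--K\"unnemann theory of height pairings and requires homological triviality on the fibres together with an actual computation with the $dd^c$-equations of the Green currents. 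Beyond that, the bad-reduction case (which includes the arithmetically central case $B_0={\rm Spec}\,R$, where smooth projective models rarely exist) is left entirely open and your two proposed remedies are speculative; property (2) presupposes Conjecture \ref{mainconj}, itself unproven; and the non-degeneracy expectation is not addressed. In short, your proposal is a reasonable roadmap consistent with the paper's stated expectations, but it does not establish the conjecture, nor does it go materially beyond the evidence the paper itself records.
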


In the case of abelian varieties Moret-Bailly describes in \cite{moret}, III 4.4.1 a candidate for the above pairing in the case when $R$ is the ring of integers of a number field, $B={\rm Spec}\,R$ and $q=1$.

Finally, for $k=\C$ one may consider the space of complex $C^\infty$ differential $(1,1)$-forms $A^{1,1}(B)$ on $B$ and compose the pairing $h$ of Conjecture \ref{conjAr} with the map $\widehat{\rm Pic}({B_0})_\Q\to A^{1,1}(B)$  obtained by taking the curvature forms of hermitian line bundles. This would give rise to a pairing $${\rm CH}^p_{\rm hom}({X_0})_\Q\otimes {\rm CH}^q_{\rm hom}({X_0})_\Q\to A^{1,1}(B)$$ for which it would be interesting to have an analytic construction.

\end{document}